\newtheorem{thm}{Theorem}[section]
\newtheorem{lmm}[thm]{Lemma}
\newcommand{\argmin}{\operatorname{argmin}}
\newcommand{\ee}{\mathbb{E}}
\newcommand{\pp}{\mathbb{P}}
\newcommand{\ra}{\rightarrow}
\newcommand{\rr}{\mathbb{R}}
\newcommand{\var}{\mathrm{Var}}
\newcommand{\ve}{\varepsilon}
\newcommand{\hm}{\hat{\mu}}
\newcommand{\TX}{\widetilde{X}}
\newcommand{\TK}{\widetilde{K}}
\newcommand{\bh}{\hat{\beta}}
\newcommand{\hs}{\hat{\sigma}}
\newcommand{\hl}{\hat{L}}
\begin{document}
\title[Regression and matrix estimation without tuning parameters]{High dimensional regression and matrix estimation without tuning parameters}
\author{Sourav Chatterjee}
\address{\newline Department of Statistics \newline Stanford University\newline Sequoia Hall, 390 Serra Mall \newline Stanford, CA 94305\newline \newline Email: \textup{\tt souravc@stanford.edu}}
\thanks{Research partially supported by NSF grant DMS-1441513}

\keywords{Regression, adaptive estimation, regularization, lasso, $\ell^1$ penalization, matrix estimation, nuclear norm penalization}
\subjclass[2010]{62F10, 62F12, 62F30, 62J05}

\begin{abstract}
A general theory for Gaussian mean estimation that automatically adapts to unknown sparsity under arbitrary norms is proposed. The theory is applied to produce adaptively minimax rate-optimal estimators in high dimensional regression and matrix estimation that involve no tuning parameters.
\end{abstract}

\maketitle
\setcounter{tocdepth}{1}
\tableofcontents

\section{Abstract theory}\label{general}
Suppose that $Y\sim N_n(\mu, \sigma^2I_n)$ for some $\mu\in \rr^n$ and $\sigma\ge 0$, where $I_n$ denotes the $n\times n$ identity matrix, and $N_n(\mu,\sigma^2I_n)$ is the $n$-dimensional Gaussian distribution with mean vector $\mu$ and covariance matrix $\sigma^2 I_n$. The statistical problem of Gaussian mean estimation is the problem of estimating the unknown mean vector $\mu$ using the observed data vector $Y$. Typically, the parameter $\sigma$ is also unknown. Given an estimator $\hm$, the most common measure of risk is the risk under the quadratic loss, namely, the quantity $\ee\|\hm-\mu\|^2$, where $\|\cdot \|$ denotes the Euclidean norm on $\rr^n$. The data vector $Y$ is itself an unbiased estimator of $\mu$. \cite{stein56} famously proved that the naive estimator $Y$ is inadmissible under quadratic loss, and an estimator that strictly dominates the naive estimator was produced by \cite{jamesstein61}.

A surprising number of problems in mathematical statistics can be framed as Gaussian mean estimation problems, where the aim is to construct estimators of $\mu$ that perform well when the true $\mu$ satisfies some given conditions. For example, it may be known to the statistician that the true $\mu$ belongs to some convex set $C$. A reasonable estimate of $\mu$ in this case is the Euclidean projection of the data vector $Y$ onto the set $C$. There is a wealth of literature on the analysis of this estimator and its applications. The monographs of \cite{vdvwellner}, \cite{vdgbook}, \cite{massart} and \cite{bg11} contain the essential references to the statistics literature on this topic. A precise approximation of the risk of this estimator under quadratic loss was recently obtained by \cite{chatterjee14b}.  The problem has also received considerable attention in the signal processing literature; see \cite{rv08}, \cite{stojnic09}, \cite{chandrasekaran}, \cite{oh13},  \cite{cj13}, \cite{amelunxen}, \cite{mccoytropp14a,mccoytropp14b} and \cite{foygelmackey14} for current developments on this front.


Often, the convex set $C$ that presumably contains the true $\mu$ appears in the form of a ball of some radius $r$ centered at the origin for some norm $K$ on $\rr^n$. Many high dimensional estimation problems of contemporary interest fit into this framework. We will see examples in later sections. To compute the projection estimator discussed in the preceding paragraph, the statistician needs to know the value of $r$. If $r$ is unknown, which is usually the case, then the projection estimator cannot be reliably defined. Using a wrong value of $r$ for projection can lead to spurious outcomes. The problem demands the construction of an adaptive estimator whose performance adapts to the unknown value of $r$, in the sense that the performance of the estimator should become better as the $K$-norm of the true $\mu$ decreases. 

The goal of  this section is to propose a solution to this general problem. Namely, if $Y\sim N_n(\mu, \sigma^2I_n)$, and $K$ is a norm on $\rr^n$, to produce an estimator of $\mu$ that has good performance whenever $K(\mu)$ is small. The performance needs to get better as $K(\mu)$ gets smaller, and no knowledge about the value of $K(\mu)$ should be required to construct the estimator. 



The theory presented in this section produces an estimator with the above property in this completely general setting. An application to high dimensional regression is given in Section \ref{regsec}, and an application to matrix estimation is given in Section \ref{matsec}. The abstract theory involves two main ideas. The first idea  shows how in the above framework, any estimator of $\sigma$ may be used to yield an estimate of the mean vector. This is the content of Theorem \ref{hsthm}, stated below. Note that the reverse direction is easy: given an estimate $\hat{\mu}$ of the mean vector $\mu$, one can easily get a reliable estimator $\hat{\sigma}$ as~$\hat{\sigma}^2 = \|Y-\hat{\mu}\|^2/n$. 

Recall that for a norm $K$, the dual norm $K^\circ$ is defined as
\begin{equation}\label{dualdef}
K^\circ(x) = \sup_{y\ne 0} \frac{x\cdot y}{K(y)}\,,
\end{equation}
where $x\cdot y$ denotes the standard inner product on $\rr^n$.
\begin{thm}\label{hsthm}
Let $K$ be a norm on $\rr^n$ and let $K^\circ$ be the dual norm of $K$. Take any $\mu\in \rr^n$, $\sigma \ge 0$, and let $Y\sim N_n(\mu, \sigma^2 I_n)$. Let $\hs$ be any random variable defined on the same probability space as $Y$. Define 
\[
\hm := \argmin\{ K(\nu): \nu\in \rr^n, \, \|Y-\nu\|^2\le n\hs^2\}\,.
\] 
Let $Z\sim N_n(0,I_n)$. Then 
\[
\frac{\ee\|\hm-\mu\|^2}{n\sigma^2} \le \frac{16 }{n\sigma}K(\mu)\ee(K^\circ(Z)) + 2\sqrt{\frac{2}{n}} + \frac{2\,\ee|\hs^2-\sigma^2|}{\sigma^2}\,,
\]
where $K^\circ$ is the dual norm of $K$.
\end{thm}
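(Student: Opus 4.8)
The plan is to run everything off the elementary decomposition of the squared error about the data vector. Set $W:=Y-\mu=\sigma Z$. Since $\hm$ is feasible for its defining optimization problem, $\|\hm-Y\|^2\le n\hs^2$, and expanding $\hm-\mu=(\hm-Y)+W$ gives the exact identity $\|\hm-\mu\|^2=\|\hm-Y\|^2-\|W\|^2+2(\hm-\mu)\cdot W$, hence
\[
\|\hm-\mu\|^2\ \le\ n\hs^2-\sigma^2\|Z\|^2+2\sigma\,(\hm-\mu)\cdot Z .
\]
The whole proof then reduces to controlling the cross term $(\hm-\mu)\cdot Z$, and the natural dichotomy is whether or not $K(\hm)\le K(\mu)$.

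On the event $\{K(\hm)\le K(\mu)\}$ I would simply write $(\hm-\mu)\cdot Z=\hm\cdot Z-\mu\cdot Z\le K(\hm)K^\circ(Z)+K(\mu)K^\circ(Z)\le 2K(\mu)K^\circ(Z)$, using the defining inequality of the dual norm for $\hm$ and for $-\mu$; substituting into the display above yields $\|\hm-\mu\|^2\le\bigl(n\hs^2-\sigma^2\|Z\|^2\bigr)+4\sigma K(\mu)K^\circ(Z)$. On the complementary event $\{K(\hm)>K(\mu)\}$ one argues instead with convexity: here $\hm\ne 0$, so the ball constraint must be active, $\|Y-\hm\|=\sqrt n\,\hs$ (otherwise a small step from $\hm$ toward the origin stays feasible and strictly lowers $K$), and moreover $(Y-\hm)\cdot(\mu-\hm)\le 0$, for if this inner product were positive then $\nu_t:=(1-t)\hm+t\mu$ would be feasible for all sufficiently small $t>0$ while satisfying $K(\nu_t)\le(1-t)K(\hm)+tK(\mu)<K(\hm)$, contradicting optimality. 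Plugging $\|Y-\hm\|^2=n\hs^2$ and $(Y-\hm)\cdot(\hm-\mu)\ge 0$ into the identity $\|Y-\mu\|^2=\|Y-\hm\|^2+\|\hm-\mu\|^2+2(Y-\hm)\cdot(\hm-\mu)$ gives $\|\hm-\mu\|^2\le\sigma^2\|Z\|^2-n\hs^2$ on this event. Whichever case occurs, one therefore has the pointwise bound
\[
\|\hm-\mu\|^2\ \le\ \bigl|\,n\hs^2-\sigma^2\|Z\|^2\,\bigr|\ +\ 4\sigma\,K(\mu)\,K^\circ(Z) ;
\]
crucially the right-hand side no longer mentions $\hm$, so the correlation between the case-indicator and $Z$ never has to be confronted.

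It remains to take expectations. For the first term use $\bigl|n\hs^2-\sigma^2\|Z\|^2\bigr|\le n\,|\hs^2-\sigma^2|+\sigma^2\,\bigl|\,\|Z\|^2-n\,\bigr|$ and then $\ee\bigl|\,\|Z\|^2-n\,\bigr|\le\bigl(\var(\|Z\|^2)\bigr)^{1/2}=\sqrt{2n}$, since $\|Z\|^2$ is a chi-square variable with $n$ degrees of freedom; the second term contributes $4\sigma K(\mu)\,\ee(K^\circ(Z))$. Dividing through by $n\sigma^2$ gives a bound of exactly the stated shape — in fact with the smaller constants $4,1,1$ in place of $16,2,2$, the slack being available to absorb degenerate configurations or to run a less case-sensitive variant of the argument. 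The only genuinely delicate point is the analysis of the case $K(\hm)>K(\mu)$: one must extract, from optimality of $\hm$ alone, both that the constraint is active and that $(Y-\hm)\cdot(\mu-\hm)\le 0$, and verify admissibility of the perturbation $\nu_t$ for small $t$. That is a short piece of convex-analysis bookkeeping, but it is the step that makes the whole estimate work.
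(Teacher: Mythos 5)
Your proof is correct, and it takes a genuinely different route from the paper. The paper argues through the family of Euclidean projections $\hm_L$ of $Y$ onto $K$-balls: it first shows $\hm=\hm_{\hl}$ with $\hl=K(\hm)$ and $\|Y-\hm\|^2=n\hs^2$, then invokes a lemma bounding $\|\hm_L-\hm_{L'}\|^2$ by the difference of squared residuals to compare $\hm$ with $\hm_L$ at $L=K(\mu)$, bounds $\ee\|\hm_L-\mu\|^2$ separately, and finally pays for the detour through $\hm_L$ via $\|\hm-\mu\|^2\le 2\|\hm-\hm_L\|^2+2\|\hm_L-\mu\|^2$ — which is exactly where the constants $16,2,2$ come from. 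You dispense with the auxiliary projections entirely: expanding about $Y$, splitting on whether $K(\hm)\le K(\mu)$, and extracting from optimality of $\hm$ the two first-order facts (active ball constraint when $\hm\ne 0$, and $(Y-\hm)\cdot(\mu-\hm)\le 0$, the same obtuse-angle phenomenon the paper exploits inside its lemma) gives the pointwise bound $\|\hm-\mu\|^2\le |n\hs^2-\sigma^2\|Z\|^2|+4\sigma K(\mu)K^\circ(Z)$ with $Z=(Y-\mu)/\sigma$; since the right-hand side is free of $\hm$, the dependence between $\hm$, $\hs$ and the noise causes no trouble, and taking expectations (with $\ee\bigl|\|Z\|^2-n\bigr|\le\sqrt{2n}$, the same chi-square bound the paper uses) yields the stated inequality with the sharper constants $4,1,1$. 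I verified the two delicate steps: the perturbation $(1-t)\hm$ shows the constraint is active whenever $\hm\ne 0$ (which holds in your second case since $K(\hm)>K(\mu)\ge 0$), and the perturbation $(1-t)\hm+t\mu$ is feasible for small $t>0$ exactly when the inner product you want to exclude is positive, while strictly decreasing $K$ because $K(\mu)<K(\hm)$. So your argument is both shorter and quantitatively stronger; what the paper's route buys in exchange is a reusable projection lemma and a geometric picture (nested $K$-balls) that the author leans on elsewhere in the exposition, but nothing in the theorem itself requires it.
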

The theorem says that if $\hs$ is a good estimate of $\sigma$, and $K(\mu)$ is small, then $\hm$ is a good estimate of $\mu$. In particular, if the value of $\sigma$ is known, then we can just take $\hs = \sigma$ and make the third term in the error bound equal to zero.

The proof of Theorem \ref{hsthm} is given in Section \ref{proofs}. A rough sketch of the idea behind the proof is as follows. For each $L\ge 0$, let $B_L$ be the $K$-ball of radius $L$ centered at the origin, and let $\hm_L$ be the Euclidean projection of $Y$ on to $B_L$. Now take any $0\le L'\le L$, and consider the triangle formed by the vertices $Y$, $\hm_L$ and $\hm_{L'}$. It is a standard fact that if $y$ is the projection of a point $x$ on to a convex set $C$, and $z$ is any point in $C$, then the angle between the rays $yz$ and $yx$ is an obtuse angle. Therefore, since $\hm_{L'}\in B_L$ and $\hm_L$ is the projection of $Y$ on to $B_L$, the angle at $\hm_L$ in the above triangle must be an obtuse angle (see Figure \ref{fig1}). 

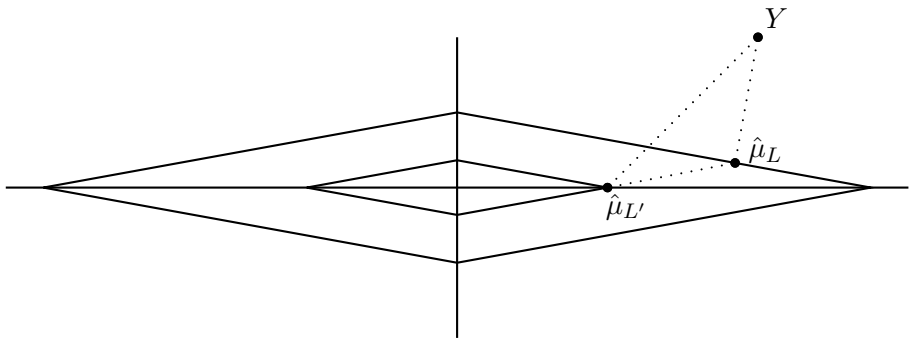
\begin{figure}[t!]
\begin{pspicture}(0,0)(12,5)
\psset{xunit=1cm,yunit=1cm}
\psline{-}(0,2)(12,2)
\psline{-}(6,0)(6,4)
\psline{-}(.5,2)(6,3)(11.5,2)(6,1)(.5,2)
\psline{-}(4,2)(6,2.363636)(8,2)(6,1.636364)(4,2)
\psline[linestyle = dotted]{*-*}(10,4)(8,2)
\psline[linestyle = dotted]{*-*}(10,4)(9.696, 2.328)
\psline[linestyle = dotted]{*-*}(9.696, 2.328)(8,2)
\rput(8.25, 1.75){$\hm_{L'}$}
\rput(10.25, 4.25){$Y$}
\rput(10.1, 2.5){$\hm_L$}
\end{pspicture}
\caption{The inner rhombus is the boundary of the $K$-ball $B_{L'}$ of radius $L'$ and the outer rhombus is the boundary of the $K$-ball $B_L$ of radius $L$. The point $\hm_L$ is the projection of $Y$ on to $B_L$ and the point $\hm_{L'}$ is the projection of $Y$ on to $B_{L'}$. Geometric considerations imply that in the triangle formed by the dotted lines, the angle at $\hm_L$ is necessarily obtuse. This, in turn, implies that if the two rays emanating from $Y$ are approximately of equal length, then $\hm_L$ must be close to $\hm_{L'}$.} 
\label{fig1}
\end{figure}

Since the angle at $\hm_L$ is an obtuse angle, an easy geometric argument shows that $\hm_{L'}$ is close to $\hm_L$ if and only if 
\[
\|Y-\hm_{L'}\|\approx \|Y-\hm_L\|\,,
\]
where $a\approx b$ means that $a/b$ is close to $1$. 

The next step in the proof is to argue that $\|Y-\hm\|\le \|Y-\nu\|$ for any $\nu$ with $K(\nu)\le K(\hm)$. This is easy to establish using the definition of $\hm$. This shows that $\hm= \hm_{\hat{L}}$, where $\hat{L}=K(\hm)$.  Moreover, it is also easy to show that 
\[
\|Y-\hm\|^2 = n\hs^2\,.
\]
The final step is to show that if $K(\mu)$ is small, and $L:= K(\mu)$, then $\hm_L$ is close to $\mu$. This is well known argument, often used in proving error bounds for penalized regression estimators. As a consequence, we get 
\[
\|Y-\hm_L\|^2 \approx \|Y-\mu\|^2\,.
\]
But $\|Y-\mu\|^2\approx n \sigma^2$ by the law of large numbers, and by the second step of the proof, if $\hs$ is a good estimate of $\sigma$, then 
\[
\|Y-\hm_{\hat{L}}\|^2 \approx n \sigma^2\,.
\]
Combining, we get 
\[
\|Y-\hm_{\hat{L}}\| \approx \|Y-\hm_L\|\,. 
\]
The first step of the proof now implies that $\hm_{\hat{L}}$ is close to $\hm_L$. But $\hm = \hm_{\hat{L}}$ and $\hm_L$ is close to $\mu$. Thus, $\hm$ is close to $\mu$. 

Although Theorem \ref{hsthm} gives a prescription for constructing an estimator of $\mu$ using an estimator of $\sigma$, it does not solve the problem of estimating $\sigma$ if $\sigma$ is unknown. It turns out that it is possible to directly construct an estimator of $\sigma$ that has good performance whenever $K(\mu)$ is sufficiently small. This is second key idea of the theory presented in this section, which yields the next theorem.
\begin{thm}\label{shthm}
Take any $n\ge 5$. Suppose that $K$ is a norm on $\rr^n$ and $Y\sim N_n(\mu, \sigma^2I_n)$ for some $\mu\in \rr^n$ and $\sigma\ge 0$. Let $Z\sim N_n(0,I_n)$ be defined on the same probability space as $Y$. Define 
\[
\hs := \frac{K(Y)}{K(Z)}\,.
\] 
Let $K^\circ$ be the dual norm of $K$. For each $k\ge 1$, let $m_k := (\ee(K^\circ(Z)^k)^{1/k}$. Let 
\[
a := \sup\{\|v\|: v\in \rr^n,\, K^\circ(v)\le 1\}\,.
\] 
Then
\[
\ee(\hs-\sigma)^2 \le \frac{K(\mu)^2 m_2^2}{(n-2)^2} + \frac{32\sqrt{2}\sigma^2 a^2m_4^2}{(n-4)^2}\,.
\]
\end{thm}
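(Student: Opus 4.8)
The plan is to write
\[
\hs - \sigma = \frac{K(Y) - \sigma K(Z)}{K(Z)}
\]
and to bound the numerator and the reciprocal of the denominator separately. For the numerator, observe that $Y-\mu$ and $\sigma Z$ have the same law $N_n(0,\sigma^2 I_n)$ and hence the same expectation, so I would decompose
\[
K(Y) - \sigma K(Z) = \bigl(K(Y) - K(Y-\mu)\bigr) + \bigl(K(Y-\mu) - \ee K(Y-\mu)\bigr) - \sigma\bigl(K(Z) - \ee K(Z)\bigr).
\]
The first bracket is at most $K(\mu)$ in absolute value by the triangle inequality, and the remaining two brackets are centered. The key point for the centered brackets is that $x\mapsto K(x)$ is $a$-Lipschitz with respect to the Euclidean norm, because $K(x)=\sup\{x\cdot v:\,K^\circ(v)\le 1\}\le a\|x\|$ (the bidual of a norm is itself, and this is the definition of $a$). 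Writing $Y-\mu=\sigma G$ with $G\sim N_n(0,I_n)$, both $K(G)$ and $K(Z)$ are $a$-Lipschitz functions of a standard Gaussian, so the Gaussian Poincar\'e inequality gives $\var K(G),\var K(Z)\le a^2$, and the Gaussian concentration inequality gives that their fourth central moments are $O(a^4)$.

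The step I expect to be the main obstacle is controlling $\ee[K(Z)^{-2}]$ and $\ee[K(Z)^{-4}]$, since there is no deterministic lower bound on $K(Z)$ to fall back on. Here I would use the elementary inequality $\|Z\|^2 = Z\cdot Z\le K(Z)\,K^\circ(Z)$, which follows directly from the definition of the dual norm, to get $K(Z)^{-1}\le K^\circ(Z)/\|Z\|^2$ and hence $\ee[K(Z)^{-p}]\le\ee\bigl[K^\circ(Z)^p\|Z\|^{-2p}\bigr]$ for $p=2,4$. Decomposing $Z=\|Z\|\,U$ with the radius $\|Z\|$ independent of the direction $U:=Z/\|Z\|$, and using $K^\circ(Z)=\|Z\|K^\circ(U)$, one obtains $\ee\bigl[K^\circ(Z)^p\|Z\|^{-2p}\bigr]=\ee[\|Z\|^{-p}]\,\ee[K^\circ(U)^p]$ with $\ee[K^\circ(U)^p]=m_p^p/\ee[\|Z\|^p]$; since $\|Z\|^2\sim\chi^2_n$ satisfies $\ee[\|Z\|^{-2}]=(n-2)^{-1}$ and $\ee[\|Z\|^{-4}]=[(n-2)(n-4)]^{-1}$, both finite exactly because $n\ge 5$, this yields $\ee[K(Z)^{-2}]\le m_2^2/(n-2)^2$ and $\ee[K(Z)^{-4}]\le m_4^4/(n-4)^4$ after crudely bounding $\ee[\|Z\|^p]$ from below.

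Finally I would assemble the estimate: from the decomposition above, $(\hs-\sigma)^2\le (K(\mu)+|R|)^2/K(Z)^2$, where $R$ collects the two centered terms. Expanding and taking expectations, the contribution $K(\mu)^2\,\ee[K(Z)^{-2}]\le K(\mu)^2 m_2^2/(n-2)^2$ yields the first summand, while the terms involving $|R|^2$ and the cross term $K(\mu)|R|$ are handled by the Cauchy--Schwarz inequality, pairing $\sqrt{\ee R^4}=O(\sigma^2 a^2)$ (from the fourth-moment concentration bound above, together with $\ee K(Y-\mu)=\sigma\ee K(Z)$) against $\sqrt{\ee[K(Z)^{-4}]}\le m_4^2/(n-4)^2$ — this is precisely what replaces $m_2,\,n-2$ by $m_4,\,n-4$ in the second summand. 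The remaining work is the bookkeeping needed to bring the constants to the stated form (in particular, to keep the coefficient of the first term equal to one, using the slack between $\ee[K(Z)^{-2}]$ and its bound and a weighted arithmetic--geometric inequality on the cross term).
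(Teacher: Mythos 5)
Your proposal follows essentially the same route as the paper's proof. The triangle inequality isolates $K(\mu)$ in the numerator; your bound $\ee[K(Z)^{-p}]\le \ee[\|Z\|^{-p}]\,\ee[K^\circ(Z)^p]/\ee[\|Z\|^p]$ via $\|Z\|^2\le K(Z)K^\circ(Z)$, the radial/angular independence, and inverse $\chi^2_n$ moments is exactly the paper's Lemma \ref{zmm} (finiteness for $p=4$ is where $n\ge 5$ enters, as you note); and your treatment of the fluctuation term via the $a$-Lipschitz property of $K$ and Gaussian concentration is the same as the paper's use of $K(Z)=\sup_{K^\circ(v)\le 1}Z\cdot v$ together with the Tsirelson--Ibragimov--Sudakov inequality. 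The only cosmetic difference is that you center $K(Y-\mu)$ and $\sigma K(Z)$ at their common mean $\sigma\ee K(Z)$, whereas the paper bounds $|K(Z)-K(Z')|$ (with $Z'=(Y-\mu)/\sigma$) by passing through that same common mean in the tail estimate; no independence of $Y$ and $Z$ is needed in either version, since both close the argument with Cauchy--Schwarz against $\ee[K(Z)^{-4}]$.

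The one piece that does not hold up is the final bookkeeping claim that the cross term $2K(\mu)\,\ee\bigl[|R|K(Z)^{-2}\bigr]$ can be absorbed so as to leave coefficient exactly $1$ on $K(\mu)^2m_2^2/(n-2)^2$. The slack you invoke, $\ee[K(Z)^{-2}]\le m_2^2/(n(n-2))$ versus $m_2^2/(n-2)^2$, is only a relative factor $(n-2)/n$, i.e.\ of size $2K(\mu)^2m_2^2/(n(n-2)^2)$, while the cross term is of order $K(\mu)\sigma a\,m_4^2/(n-4)^2$; when, say, $K(\mu)\asymp \sigma a\sqrt{n}$ this exceeds both the first-term slack and whatever room is left in the $32\sqrt{2}\,\sigma^2a^2m_4^2/(n-4)^2$ budget after paying for the $R^2$ term, so no weighted AM--GM can rescue it. What your argument honestly yields (via $(K(\mu)+|R|)^2\le 2K(\mu)^2+2R^2$) is the bound with a factor $2$ on the first term --- which is precisely what the paper's own proof establishes as well, since it uses the same crude split and arrives at $2K(\mu)^2m_2^2/(n-2)^2+32\sqrt{2}\sigma^2a^2m_4^2/(n-4)^2$; the coefficient $1$ in the displayed statement is not actually produced by the paper's argument either. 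So your proof is on par with the paper's; just state the first term with the factor $2$ (or a coefficient $1+\delta$ with a correspondingly inflated second constant) rather than claiming the printed constant.
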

This result says, roughly speaking, that whenever $K$ satisfies certain conditions and $K(\mu)$ is small enough, then $\hs$ is a good estimate of $\sigma$. As usual, the proof is given in Section \ref{proofs}. A brief sketch of the idea behind the proof is as follows. Let $\ve := Y-\mu$. If $K(\mu)$ is small, then by the triangle inequality for $K$,  
\[
K(Y)= K(\ve + \mu)\approx K(\ve)\,.
\]
The second step is to observe that 
\[
K(Z) = \sup\{Z\cdot v: v\in \rr^n,\, K^\circ(v)\le 1\}\,,
\]
and therefore $K(Z)$ is the maximum of a Gaussian field. There are general inequalities which show that under mild conditions, the fluctuations of the maximum of a Gaussian field are small compared to its expected value. If these conditions hold, then $K(Z)$ would be close to $\ee(K(Z))$ with high probability. By the same logic,  since $\ve \sim N_n(0,\sigma^2I_n)$, therefore 
\[
K(\ve) \approx \sigma \ee(K(Z))
\]
with high probability. Combining this with the first step of the proof, we get 
\[
\hs = \frac{K(Y)}{K(Z)}\approx \frac{K(\ve)}{K(Z)}\approx \sigma\,. 
\]
It is easy to see how a  combination of Theorems \ref{hsthm} and~\ref{shthm} can give an estimator of $\mu$ that has accuracy whenever $K(\mu)$ is small enough. Moreover, two different norms can be used for the two parts. An example of a result that is obtained by combining Theorem \ref{hsthm} and Theorem \ref{shthm} is the following.  
\begin{thm}\label{genthm}
Take any $n\ge 5$. Suppose that $K$ and $\widetilde{K}$ are two norms on $\rr^n$ and $Y\sim N_n(\mu, \sigma^2I_n)$ for some $\mu\in \rr^n$ and $\sigma\ge 0$. Let $Z\sim N_n(0,I_n)$ be defined on the same probability space as $Y$. Define
\[
\hs := \frac{\widetilde{K}(Y)}{\widetilde{K}(Z)}
\]
and 
\[
\hm := \argmin\{ K(\nu): \nu\in \rr^n, \, \|Y-\nu\|^2\le n\hs^2\}\,.
\] 
For each $k\ge 1$, let $m_k := (\ee(\widetilde{K}^\circ(Z)^k)^{1/k}$, where $\widetilde{K}^\circ$ is the dual norm of $\widetilde{K}$. Let 
\[
a := \sup\{\|v\|: v\in \rr^n,\, \widetilde{K}^\circ(v)\le 1\}\,.
\] 
Then
\begin{align*}
\frac{\ee\|\hm-\mu\|^2}{n\sigma^2} &\le \frac{16}{n\sigma}K(\mu)\ee(K^\circ(Z)) + 2\sqrt{\frac{2}{n}}  +  \frac{2\widetilde{K}(\mu)^2 m_2^2}{(n-2)^2\sigma^2}\\
&\qquad  + \frac{64\sqrt{2} a^2m_4^2}{(n-4)^2} + \frac{4\widetilde{K}(\mu) m_2}{(n-2)\sigma} + \frac{28am_4}{n-4}\,.
\end{align*}
\end{thm}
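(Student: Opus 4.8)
\textit{Proof proposal.} The plan is to obtain Theorem \ref{genthm} by feeding the estimator $\hs = \widetilde{K}(Y)/\widetilde{K}(Z)$ of Theorem \ref{shthm} into the construction of Theorem \ref{hsthm}. Since the $\hm$ of Theorem \ref{genthm} is defined by exactly the recipe of Theorem \ref{hsthm} applied with the norm $K$, that theorem applies verbatim and gives
\[
\frac{\ee\|\hm-\mu\|^2}{n\sigma^2} \le \frac{16}{n\sigma}K(\mu)\,\ee(K^\circ(Z)) + 2\sqrt{\frac{2}{n}} + \frac{2\,\ee|\hs^2-\sigma^2|}{\sigma^2}\,.
\]
The first two terms are already the first two terms of the claimed bound, so everything reduces to estimating $\ee|\hs^2-\sigma^2|$. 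For that I would invoke Theorem \ref{shthm}, but now with $\widetilde{K}$ in the role of $K$; the quantities $m_2,m_4,a$ appearing in Theorem \ref{genthm} are precisely those of Theorem \ref{shthm} for the norm $\widetilde{K}$, so its conclusion reads
\[
D := \ee(\hs-\sigma)^2 \le \frac{\widetilde{K}(\mu)^2 m_2^2}{(n-2)^2} + \frac{32\sqrt{2}\,\sigma^2 a^2 m_4^2}{(n-4)^2}\,.
\]

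The only step that is not purely mechanical is converting this second-moment bound on $\hs-\sigma$ into a first-moment bound on $\hs^2-\sigma^2$. I would use the identity $\hs^2-\sigma^2 = (\hs-\sigma)^2 + 2\sigma(\hs-\sigma)$, which gives $|\hs^2-\sigma^2| \le (\hs-\sigma)^2 + 2\sigma|\hs-\sigma|$, and then Cauchy--Schwarz (equivalently Jensen) to bound $\ee|\hs-\sigma|\le\sqrt{D}$. This yields
\[
\frac{2\,\ee|\hs^2-\sigma^2|}{\sigma^2} \le \frac{2D}{\sigma^2} + \frac{4\sqrt{D}}{\sigma}\,.
\]

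It remains to substitute the bound on $D$ and collect terms. The $2D/\sigma^2$ piece produces exactly $\frac{2\widetilde{K}(\mu)^2 m_2^2}{(n-2)^2\sigma^2} + \frac{64\sqrt{2}\,a^2 m_4^2}{(n-4)^2}$, the third and fourth terms. For the $4\sqrt{D}/\sigma$ piece I would use subadditivity of the square root, $\sqrt{x+y}\le\sqrt{x}+\sqrt{y}$, which gives $\sqrt{D}\le \frac{\widetilde{K}(\mu) m_2}{n-2} + \frac{\sqrt{32\sqrt{2}}\,\sigma a m_4}{n-4}$; since $\sqrt{32\sqrt{2}} = 4\cdot 2^{3/4} < 7$, this contributes at most $\frac{4\widetilde{K}(\mu) m_2}{(n-2)\sigma} + \frac{28\,a m_4}{n-4}$, the last two terms. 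Summing the six contributions gives the asserted inequality. I do not anticipate any real obstacle; the only mild annoyance is keeping the numerical constants straight through the square-root splitting, and conceptually the argument is nothing more than Theorem \ref{hsthm} consuming the output of Theorem \ref{shthm}.
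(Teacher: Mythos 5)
Your proposal is correct and follows the paper's proof exactly: Theorem \ref{hsthm} is applied with the plug-in $\hs=\widetilde{K}(Y)/\widetilde{K}(Z)$, Theorem \ref{shthm} (with $\widetilde{K}$ in place of $K$) bounds $\ee(\hs-\sigma)^2$, and the conversion via $|\hs^2-\sigma^2|\le(\hs-\sigma)^2+2\sigma|\hs-\sigma|$ together with H\"older's inequality is precisely the paper's argument. Your constant bookkeeping, including $4\sqrt{32\sqrt{2}}=16\cdot 2^{3/4}<28$, reproduces the stated bound.
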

It is not clear whether the estimator $\hm$ given in  Theorem \ref{genthm} has any general optimality property under any set of conditions. However we will see in the following sections that in special cases of interest, $\hm$ indeed turns out to be minimax rate-optimal.  



\section{Application to high dimensional regression}\label{regsec}
Consider the familiar regression framework. Let $n$ and $p$ be two positive integers, and let $X$ be an $n\times p$ matrix with real entries, called the design matrix. Let $\beta_0\in \rr^p$ be a vector of parameters, $\sigma\ge 0$ be another parameter, and let
\begin{equation}\label{regeq}
Y = X\beta_0+\ve\,,
\end{equation}
where $\ve \sim N_n(0,\sigma^2I_n)$. The vector $\beta_0$ and the number $\sigma$ are unknown to the statistician, who knows only the response vector $Y$ and the design matrix~$X$. The  objective is to estimate the vector $\beta_0$ from the observed vector $Y$, which is called the response vector. 

The regression problem is called high dimensional if the number of covariates $p$ is large, usually much larger than the number of data points $n$. Regression problems where the number of covariates exceeds the number of responses have become increasingly important in the last twenty years. Statisticians have devised a number of penalized regression techniques for dealing with such problems, such as the Lasso by \cite{tibs96}, basis pursuit by \cite{chenetal98}, the SCAD algorithm of \cite{fanli01}, the LARS algorithm of \cite{efronetal04}, the elastic net by \cite{zh05} and the Dantzig selector by \cite{candestao07}. Sophisticated variants of these methods have emerged over the years, such as the group Lasso by \cite{yuanlin06}, the adaptive Lasso by \cite{zou06} and the square-root Lasso by~\cite{bcw11}. Penalized regression is not the only approach; for example, methods of model selection by testing hypotheses have been proposed by \cite{birgemassart01}, \cite{birge06},  \cite{abramovichetal06} and \cite{barbercandes15}. 

There is now a large body of literature on the theoretical properties of the penalized regression techniques mentioned above. Perhaps the most widely studied among these methods is the Lasso. The theoretical analysis of basis pursuit by \cite{chenetal98} already had substantial information in this direction because of the close connection between the Lasso and basis pursuit. \cite{knightfu00} analyzed the Lasso when $p$ is fixed and $n\ra \infty$. Later, theoretical results that accommodated $p$ growing as fast as $n$ or even faster, began to emerge. A notable early example is \cite{gr04}, where the influential notion of `persistence' for measuring the efficacy of a high dimensional estimator was introduced. \cite{meinshausenbuhlmann06} gave a novel application of the Lasso to recovering dependency structures in a paper that contains some of the earliest theoretical techniques for analyzing the Lasso in a high dimensional setting. \cite{btw07a} were the first to prove oracle inequalities for $\ell^1$-penalized regression.   The Dantzig selector was introduced by  \cite{candestao07}, who also introduced a number of novel ideas that spurred much of the subsequent research on the analysis of sparse regression techniques. \cite{zhaoyu07}, introduced the `irrepresentability condition' that guarantees consistent model selection by the Lasso. This work was significantly refined and extended by \cite{zhanghuang08}, \cite{meinshausenyu09} and \cite{wainwright09}. 

The optimality of model selection by the Lasso under a wide range of conditions was established by \cite{candesplan09}. Theoretical analysis of the Lasso for generalized linear models was carried out by \cite{vdg08}. \cite{bickeletal09} constructed a unified theoretical framework that is capable of analyzing the Lasso, the Dantzig selector, and a variety of other techniques. Unification of oracle inequalities for a variety of high dimensional techniques, including the Lasso and the Dantzig selector, was achieved by \cite{vdgb09}. A further unification with general high dimensional $M$-estimators was obtained by \cite{negahbanetal12}. \cite{raskuttietal11}, \cite{bartlettetal12} and \cite{chatterjee14, chatterjee14b} revisited prediction error bounds for the Lasso. A nice textbook reference for many of these developments is \cite{bg11}. 

Recently, researchers have started investigating the asymptotic distributional properties of penalized regression estimates and ways to build confidence intervals and carry out tests of hypotheses; see \cite{wr09}, \cite{mmb09}, \cite{mb10}, \cite{mtc11}, \cite{tt12}, \cite{vdgb14}, \cite{jm14a, javanmardmontanari14}, \cite{zhangzhang14} and \cite{lttt14} for the latest developments.

A common feature of  the methods discussed above is that the user is required to supply the values of one or more `tuning parameters'. The tuning parameters are numbers or other objects that are chosen at the user's discretion, and the user `tunes' the values of these parameters to get optimal results. In practice, the tuning process usually involves the data, for example through cross-validation. The resultant estimates are invariably a highly complicated objects with very little theoretical understanding. The only existing papers that provide some level of theoretical justification for cross-validated estimates in high dimensional regression are those of \cite{lecuemitchell12},  \cite{hm13a, hm13b, hm14} and \cite{cj15}. There is a closely related technique, known as `aggregation', that has a much more extensive theoretical foundation. The central goal of aggregation is to take a finite collection of predictors, and find a combination of them that optimizes some measure of performance.  This combination itself may depend on the data. Aggregation was pioneered by \cite{nemirovski00}, and vastly developed by many authors, including \cite{yang00, yang01, yang04}, \cite{gyorfietal02}, \cite{wegkamp03}, \cite{tsybakov04}, \cite{catoni04}, \cite{btw07b} and \cite{rt07, rt12}. Aggregation, however, has its own tuning parameter that requires user intervention, namely, the choice of predictors that are aggregated.



The mathematical statistics literature cited above contains prescriptions for choosing optimal values of the tuning parameters. For example, if $\sigma$ is known, a popular choice for the penalty parameter in ordinary  Lasso is 
\[
\lambda = \frac{4\|X^Tw\|_\infty}{n}\,,
\]
where $X^T$ is the transpose of the design matrix $X$, $w$ is a vector of $n$  i.i.d.~$N(0,\sigma^2)$ random variables, and $\|\cdot \|_\infty$ denotes the $\ell^\infty$ norm on $\rr^n$. See \cite{negahbanetal12} and \cite{clw} for further details.

The main problem with such theoretical prescriptions is that they require a priori knowledge about the unknown parameter $\sigma$. A variety of estimates for $\sigma$ in high dimensional regression have been proposed in recent times, for example by \cite{stadleretal10}, \cite{fanetal12}, \cite{sunzhang12}, \cite{dicker14} and \cite{cj15}. The difficulty with using these estimates  is that these estimates themselves involve tuning parameters, and moreover they usually need the regression to be performed before the estimate of $\sigma$ can be produced.  One regression technique where knowledge about $\sigma$ is not required is the square-root Lasso of \cite{bcw11}. Square-root Lasso has many nice properties, such as near-oracle performance when the number of nonzero components of $\beta_0$ is small. However, square-root Lasso is not completely free of tuning parameters: it has its own tuning parameter that needs to be calibrated by the user.

The main contribution of this section is a simple new regression procedure for high dimensional data that does not need the user to input anything other than the design matrix and the response vector.  The estimator is obtained by specializing the abstract method proposed in Section \ref{general} to the regression setting. A general error bound is provided in Theorem \ref{mainthm} below. The error bound implies that under certain conditions, the estimate is adaptively minimax rate-optimal in $\ell^1$ balls.  



Given a response vector $Y$ and a design matrix $X$ related by equation  \eqref{regeq}, the proposed technique produces a randomized estimate $\bh$ of $\beta_0$ and a randomized estimate $\hs$ of $\sigma$ through the following sequence of steps. The method requires no conditions on the design matrix, but seems to perform better in simulations if the design matrix is standardized.
\vskip.2in
\hrule
\begin{enumerate}[\indent Step 1:]
\item Let $\gamma := \max_{1\le j\le p} \|X_j\|/\sqrt{n}$, 
where $X_j$ is the $j^{\mathrm{th}}$ column of $X$ and $\|X_j\|$ is the Euclidean norm of $X_j$. 
\item Define a new matrix $\TX$ of order $n\times(p+n)$ as $\TX := [X \  \sqrt{n}\gamma I_n]$.  
\item Generate $Z\sim N_n(0,I_n)$, independent of $Y$. 
\item Let $M_1 := \min\{|\beta|_1:\beta\in \rr^{p+n},\, Y=\TX \beta\}$ 
and  $M_2 := \min\{|\beta|_1:\beta\in \rr^{p+n},\, Z=\TX \beta\}$, where $|\beta|_1$ denotes  the $\ell^1$ norm of $\beta$. 
\item Let $\hs := M_1/M_2$. 
\item Let $Y'$ be the projection of $Y$ on to the column space of $X$.
\item 
Let $\bh := \argmin\{|\beta|_1: \beta\in \rr^{p},\, \|Y'-X\beta\|^2 \le k \hs^2\}$, where $k$ is the rank of $X$. 
\end{enumerate}
\hrule
\vskip.2in
The theorem stated below gives an upper bound on the expected mean squared prediction error of $\bh$. The prediction error measures how well $X\bh$ estimates the mean vector $X\beta_0$. It also gives an upper bound on the risk of $\hs$ under quadratic loss. 
\begin{thm}\label{mainthm}
Let all notation be as above. Suppose that $n\ge 8$ and $p\ge 8$. Let
\[
r := \frac{|\beta_0|_1\gamma}{\sigma} \sqrt{\frac{\log (p+n)}{n}}\,.
\]
Then 
\[
\frac{\ee\|X\bh-X\beta_0\|^2}{n\sigma^2} \le Cr + Cr^2 + C\sqrt{\frac{\log (p+n)}{n}}+ \frac{C\log (p+n)}{n}\,
\]
and
\[
\ee\biggl(\frac{\hs}{\sigma}-1\biggr)^2 \le Cr^2 + \frac{C\log (p+n)}{n}\,,
\]
where $C$ is a universal constant.
\end{thm}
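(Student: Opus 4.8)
\emph{The plan.} The procedure of Steps~1--7 is an instance of the abstract construction of Section~\ref{general}, carried out with two \emph{different} norms living in two \emph{different} Euclidean spaces: a norm $\TK$ on $\rr^n$ for which Step~5 is exactly the estimator $\hs$ of Theorem~\ref{shthm}, and a norm $K$ on the column space of $X$ for which Step~7 is exactly the estimator of Theorem~\ref{hsthm}. I would first make these identifications precise, then bound the Gaussian quantities occurring in the two error bounds in terms of $\gamma,n,p$ and $\log(p+n)$, and combine.

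\emph{The $\sigma$-part.} For $y\in\rr^n$ put $\TK(y):=\min\{|\beta|_1:\beta\in\rr^{p+n},\ \TX\beta=y\}$. Since $\TX$ contains the block $\sqrt n\,\gamma\,I_n$ it has full row rank, so $\TK$ is a genuine norm on $\rr^n$, whose dual (by gauge/polar duality) is $\TK^\circ(x)=\max\{\max_{1\le j\le p}|X_j\cdot x|,\ \sqrt n\,\gamma\,\|x\|_\infty\}$. Since $\hs=M_1/M_2=\TK(Y)/\TK(Z)$, Theorem~\ref{shthm} applies with this norm, and I would estimate its constants for $\TK$ as follows: with $\mu:=X\beta_0$ one has $\TK(\mu)\le|\beta_0|_1$ (take $\beta=(\beta_0,0)$); since $\|X_j\|\le\sqrt n\,\gamma$ for all $j$, $\TK^\circ(Z)$ is the maximum of the absolute values of $p+n$ jointly Gaussian variables each of variance at most $n\gamma^2$, so the standard Gaussian maximal moment inequality gives $m_2\le m_4\le C\sqrt n\,\gamma\sqrt{\log(p+n)}$; and $\TK^\circ(v)\ge\sqrt n\,\gamma\,\|v\|_\infty\ge\gamma\|v\|$ gives $a\le 1/\gamma$. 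Substituting into Theorem~\ref{shthm}, using $(n-2)^{-2},(n-4)^{-2}\le Cn^{-2}$ and $\log(2(p+n))\le 2\log(p+n)$ (valid since $n,p\ge8$), yields the second assertion, $\ee(\hs/\sigma-1)^2\le Cr^2+C\log(p+n)/n$.

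\emph{The $\beta_0$-part.} Let $U$ be an $n\times k$ matrix whose columns form an orthonormal basis of the ($k$-dimensional) column space of $X$, and define the norm $K$ on $\rr^k$ by $K(z):=\min\{|\beta|_1:\beta\in\rr^{p},\ X\beta=Uz\}$, with dual $K^\circ(w)=\max_{1\le j\le p}|X_j\cdot Uw|$. The effect of Step~6 is that $\hat Y:=U^TY'=U^TY\sim N_k(\hat\mu,\sigma^2I_k)$ with $\hat\mu:=U^T\mu$, that $\|Y'-X\beta\|^2=\|\hat Y-U^TX\beta\|^2$ for every $\beta$, and that the least $|\beta|_1$ with $U^TX\beta=z$ equals $K(z)$; hence, writing $\hat\nu:=U^TX\bh$, Step~7 realizes the estimator $\hat\nu=\argmin\{K(z):z\in\rr^k,\ \|\hat Y-z\|^2\le k\hs^2\}$ of Theorem~\ref{hsthm} applied in $\rr^k$ with the norm $K$, data $\hat Y$, and the arbitrary data-dependent $\hs$, so that $\|X\bh-X\beta_0\|^2=\|\hat\nu-\hat\mu\|^2$. (The standard Gaussian in the bound of Theorem~\ref{hsthm} is a fresh $Z'\sim N_k(0,I_k)$, unrelated to the $Z$ of Step~3.) Using $K(\hat\mu)\le|\beta_0|_1$ (take $\beta=\beta_0$, since $U\hat\mu=X\beta_0$) and $\ee(K^\circ(Z'))\le C\sqrt n\,\gamma\sqrt{\log(p+n)}$ exactly as in the $\sigma$-part, Theorem~\ref{hsthm} gives, after multiplying through by $k/n\le 1$,
\begin{align*}
\frac{\ee\|X\bh-X\beta_0\|^2}{n\sigma^2}&\le\frac{16}{n\sigma}K(\hat\mu)\,\ee(K^\circ(Z'))+\frac{2\sqrt{2k}}{n}+\frac{2k}{n}\cdot\frac{\ee|\hs^2-\sigma^2|}{\sigma^2}\\
&\le Cr+C\sqrt{\frac{\log(p+n)}{n}}+\frac{2\,\ee|\hs^2-\sigma^2|}{\sigma^2}.
\end{align*}
To finish I would write $\ee|\hs^2-\sigma^2|\le 2\sigma\,\ee|\hs-\sigma|+\ee(\hs-\sigma)^2$ and apply Cauchy--Schwarz together with the second assertion (setting $\rho^2:=Cr^2+C\log(p+n)/n$) to obtain $\ee|\hs^2-\sigma^2|/\sigma^2\le 2\rho+\rho^2\le Cr+Cr^2+C\sqrt{\log(p+n)/n}+C\log(p+n)/n$, and then collect terms to get the first assertion.

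\emph{The main obstacle.} The substantive point is the bookkeeping of the $\beta_0$-part: verifying that after the projection of Step~6 the problem genuinely becomes the Gaussian mean estimation problem of Theorem~\ref{hsthm} in $\rr^k$ with the norm $K$ (in particular that the constraint $\|Y'-X\beta\|^2\le k\hs^2$ is precisely $\|\hat Y-z\|^2\le k\hs^2$, with dimension constant $k$ and not $n$), and that the ensuing factor $k/n\le 1$ can be discarded so that the final bound reads with $n$ in the denominator. The remaining pieces---the two applications of the Gaussian maximal inequality, the elementary estimates $\TK(\mu),K(\hat\mu)\le|\beta_0|_1$ and $a\le 1/\gamma$, and the passage from $n-2,n-4,\log(2p),\log(2(p+n))$ to $n$ and $\log(p+n)$ under $n,p\ge8$---are routine.
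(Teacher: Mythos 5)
Your proposal is correct and takes essentially the same approach as the paper: Theorem \ref{shthm} applied to the norm induced by $\TX$ (using $a\le\gamma^{-1}$, $\widetilde{K}(X\beta_0)\le|\beta_0|_1$, and the Gaussian maximal moment bound of Lemma \ref{gaussmom}) gives the $\hs$ bound, while Theorem \ref{hsthm}, applied after an inner-product-preserving reduction to the column space of $X$, gives the prediction bound after multiplying through by $k/n$ and controlling $\ee|\hs^2-\sigma^2|$ exactly as in the proof of Theorem \ref{genthm}. The only cosmetic difference is that the paper invokes Theorem \ref{genthm} directly when $\operatorname{rank}(X)=n$ and reserves the projection argument for the rank-deficient case, whereas you carry out the projection uniformly and recombine Theorems \ref{hsthm} and \ref{shthm} by hand.
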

The reason for dividing the risk by $n\sigma^2$ is to compare $X\bh$ with the naive unbiased estimate of $X\beta_0$, which  is simply the vector $Y$. The risk of the naive estimator is equal to $n\sigma^2$. 

The proof of Theorem \ref{mainthm} is based on a straighforward application of Theorem \ref{genthm} of Section \ref{general}. When $X$ has rank $n$, Theorem \ref{genthm} implies Theorem \ref{mainthm}  upon taking $\widetilde{K}$ to be the norm 
\[
\widetilde{K}(\mu) = \min\{|\beta|_1: \beta\in \rr^{p+n},\, \mu=\TX\beta\}
\]
and  $K$ to be the norm 
\[
K(\mu) = \min\{|\beta|_1: \beta\in \rr^{p},\, \mu=X\beta\}\,.
\] 
The reason for using two different norms is that the quantity $a$ in Theorem \ref{genthm} is well-behaved for $\TK$ but may not be well-behaved for $K$.  When the rank of $X$ is less than $n$, a slightly modified argument is required. The details of these arguments are given in Section \ref{proofs}.

The minimax rate-optimality of $\bh$ for prediction error in $\ell^1$ balls is established using Theorem~3 in  \cite{raskuttietal11}. This result says the following. Let all notation be as above. Take any $L\ge 0$. Suppose that there are constants $\kappa > 0$ and $c\ge 0$ such that for all $\beta\in \rr^p$ with $|\beta|_1\le 2L$, 
\begin{equation}\label{rascond}
\frac{\|X\beta\|}{\sqrt{n}}\ge \kappa\|\beta\| - \kappa c L \sqrt{\frac{\log p}{n}}\,.
\end{equation}
Suppose further that $c_1, c_2> 0$ and $\delta\in (0,1)$ are constants such that
\begin{equation}\label{rascond2}
\frac{p}{L\sqrt{n}} \ge c_1p^\delta \ge c_2\,.
\end{equation}
Then Theorem 3 of \cite{raskuttietal11} implies that
\begin{align}\label{raskutti}
\inf_{\tilde{\beta}} \sup_{\beta_0\,:\, |\beta_0|_1\le L}\frac{\ee\|X\tilde{\beta}-X\beta_0\|^2}{n\sigma^2} \ge  \frac{C L}{\sigma} \sqrt{\frac{\log p}{n}}\,,
\end{align}
where the infimum is taken over all possible estimators $\tilde{\beta}$ and $C$ is a constant that depends only on $\gamma$, $\kappa$, $c$, $c_1$, $c_2$ and $\delta$.

It was shown by \cite{raskuttietal11} and  \cite{ruzho13} that the condition \eqref{rascond} holds for a large class of design matrices. This includes, but is not limited to, matrices with i.i.d.~Gaussian entries. The condition~\eqref{rascond2} is a mild growth condition on the dimensions of the design matrix which has nothing to do with the matrix entries. It is possible that the lower bound~\eqref{raskutti} holds under more general conditions than \eqref{rascond} and \eqref{rascond2}.

The square-root Lasso of \cite{bcw11} also achieves minimax rate-optimality (and moreover, near-oracle performance) with a theoretically prescribed choice of the tuning parameter. However, this optimality holds when $\beta$ has a small number of nonzero entries, and not when $\beta$ has small $\ell^1$ norm.



Two other recently proposed adaptively rate-optimal estimators for high dimensional regression deserve mention. One, called SLOPE, was suggested by \cite{bogdanetal14}. SLOPE  was shown to be adaptively minimax rate-optimal by \cite{sucandes15} for a certain norm called the `sorted $\ell^1$ norm'. Another one, called $\mathrm{AV}_\infty$, was suggested by \cite{clw} and shown to be adaptively rate-optimal for sup-norm sparsity.

There are several unresolved issues about the estimator proposed in this section. The foremost theoretical issue is that Theorem \ref{mainthm} requires the Gaussian error assumption. Although simulation results suggest that some version of the theorem should hold even for non-Gaussian errors, there is no mathematical proof. The main difficulty in extending the result to the non-Gaussian setting is that one has to show that the quantity $M_2$ remains relatively unchanged if $Z$ is replaced by a non-Gaussian vector with i.i.d.~components that have zero mean and unit variance. This universality cannot be expected in full generality (for example, it fails when $X$ is the identity matrix), but some mild condition on $X$ may suffice. Homoskedasticity is another assumption that one should be able to drop, although removing this assumption will probably require a modification of the estimator. Another minor problem is that the proposed estimator is a randomized estimator. Although most estimators used in practice are randomized because the tuning parameters are chosen using randomized processes such as cross-validation, it would be nice to have a non-randomized estimator with properties similar to that of the proposed one.

\section{Application to matrix estimation}\label{matsec}
Let $M = (\mu_{ij})_{1\le i\le l, \, 1\le j\le m}$ be an $l \times m$ matrix with real entries. The set of all such matrices will be denoted by $\rr^{l\times m}$. Let $s_1,\ldots, s_k$ be the singular values of $M$, where $k = \min\{l,m\}$.  Recall that the Hilbert--Schmidt norm of $M$ is defined as
\[
\|M\|_{\mathrm{HS}} := \biggl(\sum_{i=1}^l \sum_{j=1}^m \mu_{ij}^2\biggr)^{1/2} = \biggl(\sum_{i=1}^k s_i^2\biggr)^{1/2}\,,
\]
and that the nuclear norm (or trace norm) of $M$ is defined as $\|M\|_*:= \sum_{i=1}^k s_i$. 

Let $Y$ be an $l\times m$ random matrix with independent entries, whose $(i,j)^{\mathrm{th}}$ entry $y_{ij}$ is normally distributed with mean $\mu_{ij}$ and variance $\sigma^2$. In other words, $Y$ is a noisy version of $M$, where the noise is Gaussian with equal variance for all entries. The goal is to estimate the entries of $M$ using the data $Y$, when both $M$ and $\sigma$ are unknown. Since we are interested in the values of the individual entries, it makes sense to use quadratic loss. That is, if $\hat{M}$ is an estimate of $M$, the risk of $\hat{M}$ is measured by the quantity~$\ee\|\hat{M}-M\|_{\mathrm{HS}}^2$.

The problem of estimating the entries of a large matrix from incomplete and/or noisy entries has received widespread attention in the last fifteen years. Early work using spectral analysis was done by a number of authors in the engineering literature, for example by \cite{azaretal01} and \cite{achlioptas01}. Recent papers on spectral methods for matrix completion include those of~\cite{ccs},  \cite{kmo10a, kmo10b}, \cite{cst}, \cite{nadakuditi14}, \cite{gavishdonoho14} and \cite{chatterjee15}. 

Early examples of non-spectral methods appeared in \cite{fazel02}, \cite{renniesrebro05} and \cite{rv07}. Recently, there has been a surge of activity around non-spectral matrix completion and estimation, especially by nuclear norm penalization.  The idea was popularized through the works of  \cite{candesrecht09}, \cite{candestao10} and \cite{candesplan10}. Notable recent papers on this topic include those of \cite{mht10}, \cite{negahban}, \cite{klt}, \cite{rohdetsybakov11}, \cite{kol2012}, \cite{dgm13}, \cite{donohogavish14} and \cite{davenport}.


We will now use the general theory of Section \ref{general} to construct an estimator of $M$ that is adaptively minimax rate-optimal for matrices with small nuclear norm. The proposed estimator is defined in three steps.
\vskip.2in
\hrule
\begin{enumerate}[\indent Step 1:]
\item Let $Z$ be an $l\times m$ matrix with i.i.d.~$N(0,1)$ entries.
\item Let $\hs := \|Y\|_*/\|Z\|_*$.
\item 
Let $\hat{M} := \argmin\{\|A\|_*: A\in \rr^{l\times m},\, \|Y-A\|_{\mathrm{HS}}^2 \le lm\hs^2\}$. 
\end{enumerate}
\hrule
\vskip.2in
The following theorem gives an upper bound on the risk of $\hat{M}$ under quadratic loss. 
\begin{thm}\label{matthm}
Let $M$, $\sigma$ and $\hat{M}$ be as above. Let
\[
s := \frac{\|M\|_*(\sqrt{l}+\sqrt{m})}{lm\sigma}
\]
Then 
\[
\frac{\ee\|\hat{M}-M\|_{\mathrm{HS}}^2}{lm\sigma^2} \le Cs + C s^2 + C\sqrt{\frac{1}{lm}}
\]
and 
\[
\ee\biggl(\frac{\hs}{\sigma}-1\biggr)^2 \le Cs^2 + \frac{C}{lm}\,,
\]
where $C$ is a universal constant. 
\end{thm}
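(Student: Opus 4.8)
The plan is to obtain Theorem~\ref{matthm} as a direct application of Theorem~\ref{genthm}. First I would identify $\rr^{l\times m}$ with $\rr^n$, where $n := lm$, so that the Hilbert--Schmidt norm $\|\cdot\|_{\mathrm{HS}}$ becomes the Euclidean norm on $\rr^n$, the trace inner product becomes the Euclidean inner product, and the data matrix $Y$ becomes a draw from $N_n(M,\sigma^2 I_n)$. Then I would apply Theorem~\ref{genthm} with both norms taken to be the nuclear norm, $K = \widetilde K = \|\cdot\|_*$. Since the dual of the nuclear norm with respect to the trace inner product is the operator norm $\|\cdot\|_{\mathrm{op}}$, this gives $K^\circ = \widetilde K^\circ = \|\cdot\|_{\mathrm{op}}$. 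With these choices the estimators $\hs = \|Y\|_*/\|Z\|_*$ and $\hat M = \argmin\{\|A\|_* : \|Y-A\|_{\mathrm{HS}}^2 \le n\hs^2\}$ from Steps 1--3 coincide exactly with the estimators $\hs = \widetilde K(Y)/\widetilde K(Z)$ and $\hm = \argmin\{K(\nu) : \|Y-\nu\|^2 \le n\hs^2\}$ of Theorem~\ref{genthm} (a matrix with i.i.d.\ $N(0,1)$ entries is exactly a standard Gaussian vector in $\rr^{lm}$). So the first bound of Theorem~\ref{matthm} will drop out of Theorem~\ref{genthm}, and the second bound out of Theorem~\ref{shthm}, once the quantities $\ee(K^\circ(Z))$, $m_2$, $m_4$ and $a$ appearing there have been estimated.

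The three estimates I would supply are: (i) $a = \sup\{\|v\|_{\mathrm{HS}} : \|v\|_{\mathrm{op}} \le 1\} = \sqrt{\min\{l,m\}}$, because a matrix all of whose singular values are at most $1$ has at most $\min\{l,m\}$ of them, so $\|v\|_{\mathrm{HS}}^2 \le \min\{l,m\}$, with equality for a partial isometry; (ii) $\ee\|Z\|_{\mathrm{op}} \le \sqrt{l} + \sqrt{m}$, the classical bound on the expected operator norm of a Gaussian random matrix, e.g.\ via Gordon's comparison inequality; and (iii) control of higher moments: since $Z \mapsto \|Z\|_{\mathrm{op}}$ is $1$-Lipschitz with respect to $\|\cdot\|_{\mathrm{HS}}$, Gaussian concentration implies $\|Z\|_{\mathrm{op}} - \ee\|Z\|_{\mathrm{op}}$ has $k$-th absolute moment at most a universal constant $C_k$, whence $m_k = (\ee\|Z\|_{\mathrm{op}}^k)^{1/k} \le \ee\|Z\|_{\mathrm{op}} + C_k^{1/k} \le C(\sqrt l + \sqrt m)$ for $k = 2,4$ (using $\sqrt l + \sqrt m \ge 2$).

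The remaining work is bookkeeping. Substituting (i)--(iii) into the six-term bound of Theorem~\ref{genthm} with $s := \|M\|_*(\sqrt l + \sqrt m)/(lm\sigma)$, I expect the first and fifth terms to be $O(s)$, the third term to be $O(s^2)$, the second term to be exactly of order $(lm)^{-1/2}$, the fourth term to be $O((lm)^{-1})$ since $a^2 m_4^2 \le \min\{l,m\}\cdot C(l+m) \le C\,lm$, and the sixth term to be $O((lm)^{-1/2})$ since $a\,m_4 \le C\sqrt{\min\{l,m\}\max\{l,m\}} = C\sqrt{lm}$; summing gives $\ee\|\hat M - M\|_{\mathrm{HS}}^2/(lm\sigma^2) \le Cs + Cs^2 + C(lm)^{-1/2}$. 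For the second bound, dividing the conclusion of Theorem~\ref{shthm} by $\sigma^2$ yields $\ee(\hs/\sigma - 1)^2 \le \|M\|_*^2 m_2^2/((lm-2)^2\sigma^2) + 32\sqrt 2\,a^2 m_4^2/(lm - 4)^2 \le Cs^2 + C(lm)^{-1}$. Throughout one may assume $lm$ exceeds a fixed absolute constant, so that $lm-2$ and $lm-4$ are comparable to $lm$; the finitely many remaining cases are absorbed by enlarging $C$. I do not anticipate any genuine obstacle: the only ingredient that is not pure calculation is the random-matrix estimate $\ee\|Z\|_{\mathrm{op}} \le \sqrt l + \sqrt m$ together with its Gaussian concentration, and the rest is the routine but slightly fussy task of collapsing the six terms of Theorem~\ref{genthm} into the three terms claimed here, with particular care that $a^2 = \min\{l,m\}$ rather than $lm$.
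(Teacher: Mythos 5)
Your proposal is correct and follows essentially the same route as the paper: identify $\rr^{l\times m}$ with $\rr^{lm}$, take $K=\widetilde K=\|\cdot\|_*$ so that $K^\circ$ is the spectral norm, bound $a$ by $\sqrt{\min\{l,m\}}$, bound the moments $m_k$ of $\|Z\|_{\mathrm{op}}$ by $C(\sqrt l+\sqrt m)$, and plug into Theorems \ref{genthm} and \ref{shthm}. The only (immaterial) difference is that you derive the moment bound via Gordon's inequality plus Gaussian concentration, whereas the paper simply cites Proposition 2.4 of \cite{rv10} for $(\ee\|Z\|^r)^{1/r}\le C(r)(\sqrt l+\sqrt m)$; your bookkeeping of the six terms matches the paper's conclusion.
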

The purpose of dividing the error by $lm\sigma^2$ is to compare the risk of $\hat{M}$ with the risk of the naive estimator $Y$, which is equal to $lm\sigma^2$. 

Just like Theorem \ref{mainthm}, the proof of this result is a direct application of Theorem \ref{genthm}, by treating $l\times m$ matrices as vectors in $\rr^{lm}$, and taking both $K$ and $\widetilde{K}$ to be the nuclear norm. The details are in Section \ref{proofs}. 

The next theorem shows that in regions where the nuclear norm is neither too small nor too large, $\hat{M}$ is an adaptively minimax rate-optimal estimator. 
\begin{thm}\label{matlow}
Let $l\le m$ be two positive integers. Take any $\delta \ge 0$ and let 
\[
s := \frac{\delta(\sqrt{l}+\sqrt{m})}{lm\sigma}\,.
\]
Suppose that $2/l\le s \le 1$. Consider the setting of Theorem \ref{matthm}. Let $\tilde{M}$ be any estimate of $M$ based on $Y$. Then there exists an $l\times m$ matrix $M$ with $\|M\|_*\le \delta$, such that if this is the true $M$, then 
\[
\frac{\ee\|\tilde{M}-M\|_{\mathrm{HS}}^2}{lm\sigma^2}\ge C s\,,
\] 
where $C$ is a positive universal constant. 
\end{thm}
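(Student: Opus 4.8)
This is a standard minimax lower bound, obtained by an Assouad-type reduction to a product of two-point problems; the only delicate point is the choice of the "effective rank'' $r$, which must be calibrated using \emph{both} inequalities $2/l\le s\le 1$. Under the hypotheses $\sigma>0$ (if $\sigma=0$ then $s=\infty$, contradicting $s\le 1$) and $\delta>0$. The plan is as follows.

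First I would fix the construction. Set $\epsilon:=\sigma/2$ and
\[
r:=\min\Bigl\{\,l,\ \bigl\lfloor 2\delta/(\sigma\sqrt m)\bigr\rfloor\,\Bigr\}\,.
\]
A short bookkeeping step records what the hypotheses give: from $s\le 1$ and $\sqrt l+\sqrt m\ge\sqrt m$ one gets $\delta/(\sigma\sqrt m)\le l$; from $s\ge 2/l$ and $\sqrt l+\sqrt m\le 2\sqrt m$ (here $l\le m$ is used) one gets $\delta/(\sigma\sqrt m)\ge 1$, whence $\lfloor 2\delta/(\sigma\sqrt m)\rfloor\ge 2\delta/(\sigma\sqrt m)-1\ge \delta/(\sigma\sqrt m)$. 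Combining, $1\le \delta/(\sigma\sqrt m)\le r\le l$. For each sign pattern $\eta\in\{-1,+1\}^{r\times m}$ let $M_\eta\in\rr^{l\times m}$ be the matrix whose first $r$ rows have entries $\epsilon\eta_{ij}$ and whose remaining $l-r$ rows vanish. Since $\rank(M_\eta)\le r\le l\le m$, Cauchy--Schwarz gives $\|M_\eta\|_*\le\sqrt r\,\|M_\eta\|_{\mathrm{HS}}=\sqrt r\cdot\epsilon\sqrt{rm}=\tfrac12 r\sigma\sqrt m\le\delta$, using $r\le 2\delta/(\sigma\sqrt m)$. Thus every $M_\eta$ lies in the nuclear-norm ball of radius $\delta$ and is an admissible choice of the true $M$.

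Next I would run the information-theoretic argument over this family. The Hilbert--Schmidt loss decomposes entrywise, and under the Gaussian model of Theorem \ref{matthm} with the uniform prior on $\eta$, the pairs $(\eta_{ij},y_{ij})$ over the active block are mutually independent, so the posterior of each $\eta_{ij}$ depends only on $y_{ij}$. Hence the Bayes risk is at least $rm$ times the Bayes risk of the scalar two-point problem ``estimate $\theta\in\{-\epsilon,\epsilon\}$ from $\theta+N(0,\sigma^2)$''. Projecting an arbitrary estimator onto $\{-\epsilon,\epsilon\}$ loses at most a factor $4$, after which the optimal-test Bayes error and Pinsker's inequality give that this scalar Bayes risk is at least $\tfrac{\epsilon^2}{2}\bigl(1-\mathrm{TV}(N(\epsilon,\sigma^2),N(-\epsilon,\sigma^2))\bigr)\ge\tfrac{\epsilon^2}{2}(1-\epsilon/\sigma)=\tfrac{\epsilon^2}{4}$, since $\epsilon/\sigma=\tfrac12$. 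Therefore, for any estimator $\tilde M$,
\[
\sup_{\eta}\,\ee_{M_\eta}\|\tilde M-M_\eta\|_{\mathrm{HS}}^2\ \ge\ \frac{1}{2^{rm}}\sum_\eta \ee_{M_\eta}\|\tilde M-M_\eta\|_{\mathrm{HS}}^2\ \ge\ \frac{rm\epsilon^2}{4}=\frac{rm\sigma^2}{16}\,,
\]
so there is a sign pattern $\eta$, hence a matrix $M=M_\eta$ with $\|M\|_*\le\delta$, realizing this bound.

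Finally I would convert to the stated normalization. Using $r\ge\delta/(\sigma\sqrt m)$ and then $\sqrt m\ge\tfrac12(\sqrt l+\sqrt m)$ (valid since $l\le m$),
\[
\frac{rm\sigma^2}{16}\ \ge\ \frac{1}{16}\cdot\frac{\delta}{\sigma\sqrt m}\cdot m\sigma^2\ =\ \frac{\delta\sigma\sqrt m}{16}\ \ge\ \frac{\delta\sigma(\sqrt l+\sqrt m)}{32}\ =\ \frac{s\,lm\sigma^2}{32}\,,
\]
so the worst-case $M$ satisfies $\ee\|\tilde M-M\|_{\mathrm{HS}}^2/(lm\sigma^2)\ge s/32$, which is the claim with $C=1/32$. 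The step I expect to be fiddliest is precisely the verification that $1\le r\le l$ \emph{and} $\|M_\eta\|_*\le\delta$ hold simultaneously for the chosen $r$: this is the only place where the precise form of the hypotheses enters, with $s\ge 2/l$ forcing $\delta/(\sigma\sqrt m)\ge 1$ (hence $r\ge 1$ and the quantitatively useful $r\ge\delta/(\sigma\sqrt m)$) and $s\le 1$ forcing $r\le l$. Everything else — the Assouad/two-point reduction, Pinsker's inequality, and the Cauchy--Schwarz bound on the nuclear norm — is routine.
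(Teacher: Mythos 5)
Your proposal is correct, and its skeleton matches the paper's: both proofs zero out all but a top block of rows, fill that block with entries of magnitude comparable to $\sigma$, control the nuclear norm through the rank via Cauchy--Schwarz (which is exactly where the hypotheses $l\le m$ and $2/l\le s\le 1$ enter, to make the number of active rows at least $1$, at most $l$, and compatible with $\|M\|_*\le\delta$), and then lower-bound the risk entrywise under a product prior. The difference is the per-entry mechanism: the paper takes the first $[ls/2]$ rows to have i.i.d.\ Uniform$[-\sigma,\sigma]$ entries and invokes the posterior-variance bound $\ee(\var(\mu_{ij}\mid Y))\ge C\sigma^2$ (asserted with ``it is not difficult to show''), whereas you use a $\pm\sigma/2$ hypercube (Assouad-style), reduce each coordinate to a two-point test via the projection/factor-$4$ step, and bound the testing error by Pinsker; your choice $r=\min\{l,\lfloor 2\delta/(\sigma\sqrt m)\rfloor\}$ is equivalent to the paper's $[ls/2]$ up to constants. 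Your route is slightly longer but fully self-contained and yields an explicit constant ($C=1/32$), while the paper's continuous prior makes the entrywise step a one-line conditional-variance statement at the cost of leaving that computation to the reader; both arguments are sound.
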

The asymptotic minimax risk with respect to the nuclear norm in the Gaussian matrix estimation problem was evaluated by \cite{donohogavish14}. In an earlier work, \cite{dgm13} showed that matrix estimation by nuclear norm penalization achieves the minimax risk asymptotically if the penalty parameter is optimally tuned. This tuning, however, would require knowledge about $\sigma^2$. If the elements of $Y$ are uniformly bounded (with a known bound) instead of Gaussian, the USVT estimator of \cite{chatterjee15} is minimax rate-optimal with respect to the nuclear norm. However, for Gaussian entries with unknown $\sigma^2$, there exists no minimax rate-optimal estimator in the literature, other than the one proposed in this section. The proof of Theorem \ref{matlow} is given in Section \ref{proofs}.

\section{Simulation results}\label{simulsec}
This section contains simulation results for the regression estimator proposed in Section \ref{regsec}. For simplicity, the entries of the design matrices were chosen to be i.i.d.~standard Gaussian random variables. The value of $\sigma$ was varied, as was the parameter vector $\beta_0$. The results were compared with the corresponding results from Lasso with 10-fold cross-validation. The output is tabulated in Table \ref{simul}.

\begin{footnotesize}
\begin{table}[t]\centering
\caption{Simulation results comparing the proposed estimator and Lasso with 10-fold cross-validation. The design matrices were constructed with i.i.d.~standard Gaussian entries, to which an intercept term was added. The reported values are averages over $50$ simulations in each case.}
\begin{tabular}{llllrrrrrrr}
\toprule
& & & & \multicolumn{3}{c}{Proposed estimator} & & \multicolumn{3}{c}{Lasso with cross validation}\\
\cmidrule{5-7}
\cmidrule{9-11}
& & & & Average & Average & Average & & Average & Average & Average\\
& & & & \# true  & \# false &  prediction & & \# true  & \# false  & prediction \\
$n$ & $p$ & $\sigma$ & $\ee(y|x)$  & positives & positives & error & & positives & positives & error\\
\midrule
100 & 1000 & 2 & $x_1+x_2$ & $1.42$ & $1.74$ & $1.28$ & & $1.92$ & $12.84$ & $0.77$ \\
100 & 1000 & 3 & $x_1+x_2$ & $0.68$ & $2.00$ & $1.79$ & & $1.18$ & $12.96$ & $1.76$\\
200 & 1000 & 2 & $x_1+x_2-x_3$ & $2.72$ & $0.66$ & $1.51$ & & $3.00$ & $22.62$ & $0.56$\\
200 & 1000 & 3 & $x_1+x_2-x_3$ & $2.16$ & $3.28$ & $1.89$ & & $2.88$ & $17.28$ & $1.27$\\
300 & 300 & 2 & $x_1+2x_2$ & $2.00$ & $12.14$ & $0.25$ & & $2.00$ & $11.48$ &$0.22$ \\
300 & 300 & 3 & $x_1+2x_2$ & $1.94$ & $7.98$ & $0.75$ & & $2.00$ & $11.54$ & $0.52$\\
400 & 4000 & 2 & $x_1+x_2+x_3+x_4$ & $4.00$ & $0.18$ & $1.27$ & & $4.00$ & $31.62$ & $0.38$ \\
400 & 4000 & 3 & $x_1+x_2+x_3+x_4$ & $3.56$ & $1.40$ & $2.16$ & & $4.00$ & $33.56$ & $0.96$ \\
\bottomrule
\end{tabular}
\vskip.2in
\label{simul}
\end{table}
\end{footnotesize}

The table shows that the proposed estimator generally has higher prediction error than Lasso with 10-fold cross-validation. However, the estimator appears to be doing a better job at model selection than the Lasso: it returns a far smaller number of false positives, while detecting the true positives at a rate that is comparable with the Lasso. The large number of false positives returned by the Lasso is considered to be a problematic feature. In the examples that are tabulated in Table \ref{simul}, two to four covariates were included in each model. The Lasso with 10-fold cross-validation typically selected 15 to 25 covariates, whereas the proposed algorithm typically selected less than seven covariates, and usually succeeded in selecting all or most of the relevant covariates.  The most striking example from the table is the following: $n=400$, $p= 4000$, $\sigma = 2$, and $\ee(y|x) = x_1+ x_2+x_3+x_4$. In this example, there are four relevant covariates. Simulations were run 50 times. In all instances, both the proposed estimator and the Lasso with 10-fold cross-validation succeeded in selecting the four relevant covariates. On the other hand, the proposed estimator rarely selected more than one or two irrelevant covariates, whereas the Lasso selected approximately 32 irrelevant covariates on average. Incidentally, the tendency of the Lasso and other high dimensional regression algorithms for selecting large numbers of irrelevant variables is well known among practicing statisticians; a recent paper where this has been noted is \cite{gselletal13}.

Recall that Theorem \ref{mainthm} is a result about the prediction error of the proposed estimator. The simulation results presented in Table \ref{simul} suggest that it would be interesting to have a counterpart of Theorem~\ref{mainthm} that analyzes the model selection property of the estimator. 

\section{Proofs}\label{proofs}
\subsection{Proof of Theorem \ref{hsthm}} 
Let $K$ be a norm on $\rr^n$ and $K^\circ$ be its dual norm. An easy consequence of the definition \eqref{dualdef} of the dual norm is that for any $x$ and $y$, 
\[
x\cdot y \le K^\circ(x) K(y)\,.
\]
In particular, 
\begin{equation}\label{csineq}
K(x)K^\circ(x) \ge \|x\|^2\,,
\end{equation}
where $\|x\|$ is the Euclidean norm of $x$. An important result about the dual norm is that  for any norm $K$, 
\begin{equation}\label{kcc}
K^{\circ\circ} = K\,.
\end{equation}
For a proof, see Theorem 15.1 in \cite{rockafellar70}. Another standard result that we will use is the Hilbert projection theorem, which says that any point in $\rr^n$ has a unique Euclidean projection on to a given closed convex set. Here `Euclidean projection' means a point in the convex set that is closest to the given point. 
\begin{lmm}\label{projlmm}
Let $K$ be a norm on $\rr^n$. Take any $x\in \rr^n$ and for each $L\ge 0$, let $w_L$ be the Euclidean projection of $x$ on to the $K$-ball of radius $L$ centered at zero. Then for any $L$ and $L'$,
\[
\|w_L-w_{L'}\|^2 \le |\|x-w_L\|^2 - \|x-w_{L'}\|^2|\,.
\]
\end{lmm}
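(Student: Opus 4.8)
The plan is to reduce to the case $L'\le L$ using the symmetry of the claimed inequality, and then to combine the nesting of the $K$-balls with the variational (``obtuse angle'') characterization of Euclidean projection onto a closed convex set. The statement is symmetric under swapping $L$ and $L'$, so without loss of generality I would assume $0\le L'\le L$. Writing $B_L$ and $B_{L'}$ for the $K$-balls of radii $L$ and $L'$ centered at $0$, homogeneity of the norm $K$ gives the inclusion $B_{L'}\subseteq B_L$. Both projections $w_L$ and $w_{L'}$ exist and are unique by the Hilbert projection theorem, since $B_L$ and $B_{L'}$ are closed convex sets.

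The key input is the standard first-order optimality condition for Euclidean projection: if $w$ is the projection of $x$ onto a closed convex set $C$ and $z\in C$, then $(x-w)\cdot(z-w)\le 0$. I would apply this with $C=B_L$, $w=w_L$, and $z=w_{L'}$ (which lies in $B_L$ because $B_{L'}\subseteq B_L$), obtaining $(x-w_L)\cdot(w_{L'}-w_L)\le 0$, equivalently $(x-w_L)\cdot(w_L-w_{L'})\ge 0$.

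Then I expand
\[
\|x-w_{L'}\|^2 = \|(x-w_L)+(w_L-w_{L'})\|^2 = \|x-w_L\|^2 + 2(x-w_L)\cdot(w_L-w_{L'}) + \|w_L-w_{L'}\|^2,
\]
and discard the nonnegative cross term to get $\|w_L-w_{L'}\|^2 \le \|x-w_{L'}\|^2 - \|x-w_L\|^2$. Finally, since $w_L$ is the closest point of $B_L$ to $x$ and $w_{L'}\in B_{L'}\subseteq B_L$, we have $\|x-w_L\|\le\|x-w_{L'}\|$, so the right-hand side is nonnegative and hence equals $\bigl|\|x-w_L\|^2-\|x-w_{L'}\|^2\bigr|$, which is exactly the desired bound.

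I do not anticipate a real obstacle here: the argument is elementary once the projection is known to be well-defined. The only point requiring minor care is citing the obtuse-angle inequality in the form above and checking that $w_{L'}$ genuinely lies in $B_L$, which is immediate from homogeneity of $K$. If one prefers, the cross-term sign can instead be derived directly from the minimality defining $w_L$ via a convexity/perturbation argument $t\mapsto\|x-(w_L+t(w_{L'}-w_L))\|^2$ on $[0,1]$, avoiding any appeal to a named lemma.
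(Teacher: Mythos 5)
Your proof is correct and follows essentially the same route as the paper: the paper derives the obtuse-angle inequality $(w_{L'}-w_L)\cdot(x-w_L)\le 0$ via exactly the perturbation argument $t\mapsto\|x-(tw_{L'}+(1-t)w_L)\|^2$ you mention as an alternative, and then performs the same expansion and drops the same signed cross term. No gaps; the reduction to $L'\le L$ and the nonnegativity remark are handled the same way.
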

\begin{proof}
Take any $L\ge L' \ge 0$. Since $K(w_{L'})\le L$ and $w_L$ is the Euclidean projection of $x$ on to the $K$-ball of radius $L$ centered at zero, therefore for each $t\in [0,1]$,
\[
\|x-(tw_{L'}+ (1-t)w_L)\|^2 \ge \|x-w_L\|^2\,.
\]
This can be rewritten as 
\[
t^2\|w_L - w_{L'}\|^2 + 2t(x-w_L)\cdot (w_L - w_{L'})\ge 0\,.
\]
Dividing throughout by $t$ and letting $t\ra 0$ gives the inequality
\[
(w_{L'} - w_{L})\cdot(x-w_L)\le 0\,.
\]
Consequently,
\begin{align*}
\|w_L - w_{L'}\|^2 &= \|(x-w_{L'}) - (x-w_{L})\|^2 \\
&= \|x-w_{L'}\|^2 + \|x-w_L\|^2 - 2(x-w_{L'})\cdot(x-w_L)\\
&= \|x-w_{L'}\|^2 - \|x-w_L\|^2 + 2(w_{L'} - w_{L})\cdot(x-w_L)\\
&\le \|x-w_{L'}\|^2 - \|x-w_L\|^2\,.
\end{align*}
This completes the proof in the case $L\ge L'$. The case $L< L'$ is treated by exchanging $L$ and $L'$ in the above argument.
\end{proof}
\begin{proof}[Proof of Theorem \ref{hsthm}]
For each $L\ge 0$, let $\hm_L$ be the Euclidean projection of $Y$ on to the $K$-ball of radius $L$ centered at zero.

Suppose that for a particular realization of $Y$, $\hm \ne 0$.  Let $\hl:=K(\hm)$. Take any $\nu$ such that $K(\nu)\le \hl$ and $\|Y-\nu\|^2< n\hs^2$. Since $\hm\ne 0$, therefore $0$ is outside the closed Euclidean ball of radius $\sqrt{n} \hs$ centered at $Y$. On the other hand $\nu$ lies in the interior of this ball. Therefore the chord connecting $\nu$ and $0$ contains a point $\nu'$ that lies on the boundary of this ball. This point satisfies $K(\nu')< \hl$, which is impossible by the definition of $\hl$. Therefore, any $\nu$ with $K(\nu)\le \hl$ must satisfy $\|Y-\nu\|^2\ge n\hs^2$. In other words, 
\begin{equation}\label{hmhl}
\hm = \hm_{\hl}\,.
\end{equation}
The argument also shows that 
\begin{equation}\label{ymyl}
\|Y-\hm\|^2 = n\hs^2\,,
\end{equation}
for otherwise the chord connecting $\hm$ and $0$ would contain a point that would give a contradiction to the definition of $\hm$. 

Fix $L=K(\mu)$ for the rest of the proof. Let  
\[
\hs_L^2 := \frac{\|Y-\hm_L\|^2}{n}\,.
\]
Then by Lemma \ref{projlmm} and the identities \eqref{hmhl} and \eqref{ymyl}, 
\begin{align*}
\frac{\|\hm_L - \hm\|^2}{n} &= \frac{\|\hm_L - \hm_{\hl}\|^2}{n}\nonumber \\
&\le \frac{1}{n}|\|Y-\hm_L\|^2-\|Y-\hm_{\hl}\|^2|\nonumber\\
&=  \frac{1}{n}|\|Y-\hm_L\|^2-\|Y-\hm\|^2|= |\hs_L^2 - \hs^2|\,.
\end{align*}
Therefore,
\begin{align}
\frac{\|\hm_L-\hm\|^2}{n}\le |\hs_L^2 - \sigma^2| + |\hs^2-\sigma^2|\,.\label{mainineq}
\end{align}
Note that we derived this inequality under the assumption that $\hm\ne 0$. Next, suppose that $\hm = 0$. Then $ \|Y\|^2\le n\hs^2$ and $\hm = \hm_0$. Therefore by Lemma~\ref{projlmm},
\begin{align*}
\frac{\|\hm_L - \hm\|^2}{n} &= \frac{\|\hm_L - \hm_0\|^2}{n}\\
&\le \frac{1}{n}|\|Y-\hm_L\|^2-\|Y-\hm_0\|^2|\\
&= \frac{\|Y\|^2}{n} - \hs_L^2 \le \hs^2 - \hs_L^2\,.
\end{align*}
Thus, \eqref{mainineq} holds even when $\hm = 0$. Let
\[
s^2 := \frac{\|Y-\mu\|^2}{n}\,.
\]
Then note that 
\begin{align*}
|\hs_L^2 - \sigma^2 | &\le |\hs_L^2 - s^2| + |s^2-\sigma^2| \\
&= \frac{1}{n} (\|Y-\mu\|^2 - \|Y-\hm_L\|^2) + |s^2-\sigma^2|\\
 &= \frac{1}{n} (2(Y-\mu)\cdot (\hm_L-\mu) - \|\mu-\hm_L\|^2) + |s^2-\sigma^2|\\
 &\le \frac{2}{n} (Y-\mu)\cdot (\hm_L-\mu) + |s^2-\sigma^2|\,.
\end{align*}
Therefore, 
\begin{align*}
\ee|\hs_L^2 - \sigma^2|&\le \frac{2\sigma}{n} \ee\biggl(\sup_{\nu\,:\,K(\nu)\le 2L}Z\cdot \nu\biggr) + \frac{\sqrt{2}\sigma^2}{\sqrt{n}}\\
&= \frac{4\sigma K(\mu)}{n} \ee(K^\circ(Z)) + \frac{\sqrt{2}\sigma^2}{\sqrt{n}}\,.
\end{align*}
Combining this with \eqref{mainineq} shows that
\begin{equation}\label{step1}
\frac{\ee\|\hm_L-\hm\|^2}{n\sigma^2} \le \frac{4 }{n\sigma}K(\mu)\ee(K^\circ(Z)) + \sqrt{\frac{2}{n}} + \frac{\ee|\hs^2-\sigma^2|}{\sigma^2}\,.
\end{equation}
Next, note that since $K(\mu)= L$ and $\hm_L$ is the Euclidean projection of $Y$ onto the $K$-ball of radius $L$ centered at the origin, 
\begin{align*}
\|Y-\mu\|^2 &\ge \|Y-\hm_L\|^2 \\
&= \|Y-\mu\|^2 + \|\mu-\hm_L\|^2 + 2(Y-\mu)\cdot(\mu-\hm_L)\,,
\end{align*}
which gives
\begin{align*}
\ee\|\mu-\hm_L\|^2 &\le\ee( 2(Y-\mu)\cdot(\hm_L-\mu))\\
&\le 2\sigma \ee\biggl(\sup_{\nu\,: \, K(\nu)\le 2L} Z\cdot \nu\biggr) = 4\sigma K(\mu) \ee(K^\circ(Z))\,.
\end{align*}
Combining this with \eqref{step1} and using the inequality $\|\hm-\mu\|^2\le 2\|\hm-\hm_L\|^2 + 2\|\hm_L-\mu\|^2$ completes the proof of the theorem.
\end{proof}
\subsection{Proofs of Theorems \ref{shthm} and \ref{genthm}}
\begin{lmm}\label{zmm}
Let $K$ be a norm on $\rr^n$ and $K^\circ$ be its dual norm. Let $Z\sim N_n(0,I_n)$. Then for any integer $k\in [1,n)$, 
\[
\ee(K(Z)^{-k})\le \frac{ \ee(K^\circ(Z)^k)}{(n-k)^k}\,.
\]
\end{lmm}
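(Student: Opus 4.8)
For a norm $K$ on $\rr^n$ with dual norm $K^\circ$, and $Z\sim N_n(0,I_n)$, we want $\ee(K(Z)^{-k})\le \ee(K^\circ(Z)^k)/(n-k)^k$ for integer $k\in[1,n)$.

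The plan is to exploit the fundamental pointwise inequality $K(x)K^\circ(x)\ge \|x\|^2$ from \eqref{csineq}, which rearranges to $K(x)^{-1}\le K^\circ(x)/\|x\|^2$. Raising to the $k$-th power gives $K(Z)^{-k}\le K^\circ(Z)^k/\|Z\|^{2k}$ pointwise, and hence after taking expectations,
\[
\ee(K(Z)^{-k})\le \ee\!\left(\frac{K^\circ(Z)^k}{\|Z\|^{2k}}\right).
\]
So the task reduces to bounding this last expectation by $\ee(K^\circ(Z)^k)/(n-k)^k$, which suggests decoupling $K^\circ(Z)^k$ from $\|Z\|^{-2k}$ and separately controlling $\ee(\|Z\|^{-2k})$.

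The key structural observation is that $K^\circ(Z)$ and $Z/\|Z\|$ behave well under the decomposition of a Gaussian vector into its radial and angular parts: $\|Z\|$ and $Z/\|Z\|$ are independent, with $Z/\|Z\|$ uniform on the sphere. Since $K^\circ$ is positively homogeneous, $K^\circ(Z)=\|Z\|\,K^\circ(Z/\|Z\|)$, so
\[
\frac{K^\circ(Z)^k}{\|Z\|^{2k}} = \frac{K^\circ(Z/\|Z\|)^k}{\|Z\|^{k}}.
\]
By independence of the radial and angular parts, the expectation factorizes as $\ee(K^\circ(Z/\|Z\|)^k)\cdot\ee(\|Z\|^{-k})$. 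To get back to $\ee(K^\circ(Z)^k)$, I write $\ee(K^\circ(Z)^k)=\ee(\|Z\|^k)\,\ee(K^\circ(Z/\|Z\|)^k)$, again by the same independence. Dividing, the $\ee(K^\circ(Z/\|Z\|)^k)$ factors cancel, and the lemma is reduced to the purely scalar inequality
\[
\ee(\|Z\|^{-k})\,\ee(\|Z\|^{k}) \le \frac{1}{(n-k)^k}\,,
\]
equivalently, in terms of a chi-squared random variable $\chi^2_n=\|Z\|^2$, that $\ee((\chi^2_n)^{-k/2})\ee((\chi^2_n)^{k/2})\le (n-k)^{-k}$.

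The remaining scalar estimate is a direct moment computation for chi-squared (or Gamma) variables: $\ee((\chi^2_n)^{k/2}) = 2^{k/2}\Gamma(n/2+k/2)/\Gamma(n/2)$ and $\ee((\chi^2_n)^{-k/2}) = 2^{-k/2}\Gamma(n/2-k/2)/\Gamma(n/2)$, so the product telescopes into a ratio of Gamma functions that can be bounded by $(n-k)^{-k}$ using monotonicity/log-convexity of $\Gamma$ (or, more elementarily, for integer or half-integer shifts, by the standard bound $\ee((\chi^2_n)^{-1})=1/(n-2)$ and Jensen-type arguments; the cleanest route is to note $\ee(\|Z\|^k)\le n^{k/2}$ is too crude, so one should keep the Gamma ratio and estimate $\Gamma(n/2+k/2)/\Gamma(n/2-k/2)\le ((n-k)/2)^k \cdot$ something — careful here). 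The main obstacle is precisely this last step: getting the constant to come out as exactly $(n-k)^{-k}$ rather than something slightly weaker. I expect one needs the sharp inequality $\Gamma(x+a)/\Gamma(x-a)\ge (x-a)^{2a}$ for $a\ge 0$ (equivalently $\prod_{j}(x-a+j)\ge (x-a)^{\cdot}$ fails in the wrong direction, so one must instead bound $\Gamma(n/2-k/2)/\Gamma(n/2)$ from above by $((n-k)/2)^{-k/2}$ using that $\Gamma$ is increasing on $[2,\infty)$ combined with the recursion), and assembling these Gamma-function estimates correctly is the delicate part; everything else is routine homogeneity and independence bookkeeping.
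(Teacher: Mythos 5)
Your overall strategy is the same as the paper's: start from the pointwise duality bound \eqref{csineq}, use homogeneity and the independence of $\|Z\|$ and $Z/\|Z\|$ to decouple the angular factor, and reduce to a moment estimate for $\|Z\|$. However, the scalar inequality you reduce to is the wrong one, and as stated it is false. After cancelling the common factor $\ee(K^\circ(Z/\|Z\|)^k)$ you need the \emph{ratio} bound $\ee(\|Z\|^{-k})/\ee(\|Z\|^{k})\le (n-k)^{-k}$, not the \emph{product} bound $\ee(\|Z\|^{-k})\,\ee(\|Z\|^{k})\le (n-k)^{-k}$ that you wrote (and repeated in the $\chi^2_n$ reformulation). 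The product version cannot hold: by Cauchy--Schwarz, $1=\bigl(\ee(\|Z\|^{-k/2}\|Z\|^{k/2})\bigr)^2\le \ee(\|Z\|^{-k})\,\ee(\|Z\|^{k})$, so the product is at least $1$; equivalently, $\Gamma((n+k)/2)\Gamma((n-k)/2)/\Gamma(n/2)^2\ge 1$ by log-convexity of $\Gamma$, so it can never "telescope" below $(n-k)^{-k}$ as your last paragraph claims.

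The second issue is that the Gamma-function step you describe as the "delicate part" is left unresolved, with some speculation about $\Gamma(x+a)/\Gamma(x-a)\ge (x-a)^{2a}$; once the reduction is stated correctly, this step is in fact immediate and is exactly how the paper finishes. With the chi-squared moments you already computed, $\ee(\|Z\|^{-k})/\ee(\|Z\|^{k})=2^{-k}\,\Gamma((n-k)/2)/\Gamma((n+k)/2)$, and since $k$ is an integer the two Gamma arguments differ by exactly $k$ unit steps, so the recursion $\Gamma(t)=(t-1)\Gamma(t-1)$ gives $\Gamma((n+k)/2)=\prod_{j=1}^{k}\frac{n+k-2j}{2}\,\Gamma((n-k)/2)\ge \bigl(\tfrac{n-k}{2}\bigr)^{k}\Gamma((n-k)/2)$, because each of the $k$ factors is at least $(n-k)/2$. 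Substituting this yields precisely $\ee(\|Z\|^{-k})/\ee(\|Z\|^{k})\le (n-k)^{-k}$ and hence the lemma; so your proof becomes complete (and coincides with the paper's) once the product is corrected to a ratio and this one-line recursion bound is supplied.
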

\begin{proof}
Recall that $\|Z\|^2$ is a $\chi^2$ random variable with $n$ degrees of freedom, which has probability density function
\[
f(x) = \frac{x^{n/2 - 1} e^{-x/2}}{\Gamma(n/2) 2^{n/2}} 
\] 
on $[0,\infty)$. Consequently, 
\begin{align*}
\ee(\|Z\|^k) &= \int_0^\infty\frac{x^{(n+k)/2 - 1} e^{-x/2}}{\Gamma(n/2) 2^{n/2}}\, dx\\
&= \frac{\Gamma((n+k)/2)2^{k/2}}{\Gamma(n/2)}\,.
\end{align*}
Similarly, if $1\le k < n$,
\begin{align*}
\ee(\|Z\|^{-k}) &= \int_0^\infty\frac{x^{(n-k)/2 - 1} e^{-x/2}}{\Gamma(n/2) 2^{n/2}}\, dx\\
&= \frac{\Gamma((n-k)/2)2^{-k/2}}{\Gamma(n/2)}\,.
\end{align*}
Therefore by \eqref{csineq} and the independence of $\|Z\|$ and $Z/\|Z\|$, we get
\begin{align*}
\ee(K(Z)^{-k}) &\le \ee(\|Z\|^{-2k} K^\circ(Z)^k)\\
&= \ee(\|Z\|^{-k} K^\circ(Z/\|Z\|)^k)\\
&= \ee(\|Z\|^{-k}) \ee(K^\circ(Z/\|Z\|)^k)\\
&= \frac{\ee(\|Z\|^{-k}) \ee(\|Z\|^kK^\circ(Z/\|Z\|)^k)}{\ee(\|Z\|^k)} \\
&=  \frac{\ee(\|Z\|^{-k}) \ee(K^\circ(Z)^k)}{\ee(\|Z\|^k)} = \frac{\Gamma((n-k)/2)2^{-k}\ee(K^\circ(Z)^k)}{\Gamma((n+k)/2)}\,.
\end{align*}
By the identity $\Gamma(t)=(t-1)\Gamma(t-1)$, we get 
\[
\Gamma((n+k)/2)\ge ((n-k)/2)^k \Gamma((n-k)/2)\,,
\]
which completes the proof. 
\end{proof}
Let $Z\sim N_n(0,I_n)$ and let $\mathcal{V}$ be any measurable subset of $\rr^n$. Let $b := \sup_{v\in \mathcal{V}}\|v\|$ and assume that $b$ is finite. Let $M := \sup_{v\in \mathcal{V}} Z\cdot v$.  
The following concentration inequality for $M$ was proved by~\cite{tis76}, although it follows with slightly worse constants from earlier works of \cite{sudakovtsirelson74} and \cite{borell75}. For any $t\ge 0$, 
\begin{equation}\label{gaussmax}
\pp(|M-\ee(M)|\ge t) \le 2e^{-t^2/2b^2}\,.
\end{equation}
In the familiar version of this inequality, the set $\mathcal{V}$ is assumed to be finite. It is easy to pass to arbitrary bounded measurable $\mathcal{V}$ by approximating $M$ by maxima over finite subsets of $\mathcal{V}$ and observing that $M$ and $\ee(M)$ can be recovered in the limit of such approximations. 

We will later need upper bounds on the moments of $M$. When $\mathcal{V}$ is a finite set, it is easy to give general upper bounds, as follows. 
\begin{lmm}\label{gaussmom}
Let $M$, $b$ and $\mathcal{V}$ be as above. Let $N$ be the size of the set $\mathcal{V}$. Suppose that $3\le N<\infty$. Then for any integer $k\in [1,2\log N]$,
\[
(\ee|M|^k)^{1/k} \le 3b \sqrt{\log N}\,.
\]
\end{lmm}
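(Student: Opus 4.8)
The plan is to bound the moments of $M = \max_{v \in \mathcal{V}} Z \cdot v$ directly from the sub-Gaussian tail bound \eqref{gaussmax}, using a standard union-bound argument for the expectation followed by the tail-integration formula for higher moments. First I would control $\ee(M)$. Since $M$ is the maximum of $N$ Gaussian random variables, each $Z \cdot v$ being $N(0, \|v\|^2)$ with $\|v\| \le b$, the classical bound gives $\ee(M) \le b\sqrt{2 \log N}$; the quickest route is the Jensen/exponential-moment trick, writing $e^{s\ee(M)} \le \ee e^{sM} \le \sum_{v} \ee e^{s Z\cdot v} \le N e^{s^2 b^2/2}$, taking logs, dividing by $s$, and optimizing over $s > 0$ to get $\ee(M) \le b\sqrt{2\log N}$. (One should note $\ee(M) \ge 0$ here since $0 \in$ the convex hull or, more simply, since $\mathcal{V}$ nonempty and symmetry is not needed — actually $\ee(M) \ge 0$ because $M \ge Z \cdot v$ for a fixed $v$ whose expectation is $0$, so $\ee(M) \ge 0$ provided... hmm, this needs $\mathcal{V} \ne \emptyset$, which holds since $N \ge 3$.)

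Next I would pass to the $k$-th moment of $|M|$. Split $|M| \le |M - \ee(M)| + \ee(M)$, so by the triangle inequality in $L^k$,
\[
(\ee|M|^k)^{1/k} \le (\ee|M - \ee(M)|^k)^{1/k} + \ee(M)\,.
\]
For the centered term, integrate the tail bound \eqref{gaussmax}: $\ee|M-\ee(M)|^k = \int_0^\infty k t^{k-1} \pp(|M-\ee(M)| \ge t)\, dt \le \int_0^\infty 2k t^{k-1} e^{-t^2/2b^2}\, dt = 2 k (2b^2)^{k/2} \Gamma(k/2) \cdot \tfrac12$, which after using $\Gamma(k/2) \le (k/2)^{k/2}$ (or a cruder bound) yields $(\ee|M-\ee(M)|^k)^{1/k} \le C b \sqrt{k}$ for an explicit small constant $C$. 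Combining with $\ee(M) \le b\sqrt{2\log N}$ and the hypothesis $k \le 2\log N$, so that $\sqrt{k} \le \sqrt{2\log N}$, both pieces are bounded by a constant multiple of $b\sqrt{\log N}$, and the arithmetic should be arranged so the total constant is at most $3$.

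The only mildly delicate point — and the place to be careful rather than an actual obstacle — is tracking the numerical constants so that the final bound is exactly $3b\sqrt{\log N}$ under the stated ranges $N \ge 3$ and $1 \le k \le 2\log N$; the condition $N \ge 3$ ensures $\log N$ is bounded away from $0$, which is what lets the $\sqrt{k}$ term and the $\ee(M)$ term be absorbed into a clean single constant. I would handle the moment-of-centered-variable estimate by the tail-integration computation above, being slightly generous with $\Gamma(k/2)$ so as not to need Stirling, and then verify the constant $3$ works by a direct inequality check. No new ideas beyond \eqref{gaussmax} and elementary calculus are needed.
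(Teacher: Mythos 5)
Your architecture (bound $\ee(M)$ by the exponential-moment union bound, split $|M|\le |M-\ee(M)|+\ee(M)$, and control the centered part by integrating the tail bound \eqref{gaussmax}) is a genuinely different route from the paper's: the paper never invokes \eqref{gaussmax} here, but instead bounds even moments directly by a union bound at the moment level, $\ee(M^k)\le \sum_{v\in\mathcal{V}}\ee(Z\cdot v)^k\le Nb^k(k-1)!!$, applies this to an even $l\in[2\log N,4\log N]$ to get $m_l\le 2e^{1/4}b\sqrt{\log N}$, and then covers all $k\le 2\log N$ by the monotonicity $m_k\le m_l$. That elementary argument lands comfortably under the constant $3$ (indeed at about $2.57$). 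Your route gives the correct order $b\sqrt{\log N}$ with some universal constant, but the specific claim that the arithmetic closes at $3$ is exactly where it breaks down, and not merely as a bookkeeping nuisance.

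Concretely, the worst admissible cases are small $k$ with $N$ near $3$. Tail integration of \eqref{gaussmax} gives $\ee|M-\ee(M)|^k\le 2^{k/2+1}\Gamma(k/2+1)b^k$, so for $k=1$ the centered term is bounded by $\sqrt{2\pi}\,b\approx 2.51\,b$ (about $1.78\,b$ even if you cap the tail probability at $1$), while $\ee(M)\le b\sqrt{2\log N}\approx 1.48\,b$ at $N=3$; the sum is at least about $3.26\,b$, exceeding the target $3b\sqrt{\log 3}\approx 3.14\,b$. The same happens at $k=2$, $N=3$ (centered term $\ge 1.84\,b$, total $\ge 3.3\,b$), and $k=3$ is borderline. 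So the plan ``bound the centered term by $Cb\sqrt{k}$, absorb via $\sqrt{k}\le\sqrt{2\log N}$, check the constant is $3$'' does not verify the lemma as stated. It is repairable within your framework --- e.g., handle $k\le 2$ via $m_1\le m_2$ and $m_2^2=\var(M)+(\ee(M))^2$ with $\var(M)$ bounded from the capped tail bound, and keep the exact $\Gamma(k/2+1)$ for $k\ge 3$ --- but as written the constant-$3$ claim is a genuine gap, and the paper's even-moment union bound avoids the issue entirely.
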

\begin{proof}
Note that for any even integer $k\ge 2$,
\begin{align*}
\ee(M^k) &\le \sum_{v\in \mathcal{V}} \ee(Z\cdot v)^k\le N b^k (k-1)!!\,.
\end{align*}
Consequently, if $m_k := (\ee|M|^k)^{1/k}$, then for any even $k$,
\begin{align*}
m_k &\le N^{1/k}b k^{1/2} \biggl(1-\frac{1}{k}\biggr)^{1/k}\biggl(1-\frac{3}{k}\biggr)^{1/k}\cdots \biggl(1-\frac{k-1}{k}\biggr)^{1/k}\, .
\end{align*}
The inequality $1-x\le e^{-x}$ implies that the product on the right is bounded by $e^{-1/4}$. On the other hand, by H\"older's inequality, if $k\le l$ then $m_k\le m_l$. Therefore if $k\le 2\log N$ and $\log N \ge 1$, then choosing $l$ to be an even number between $2\log N$ and $4\log N$ (which exists because $2\log N \ge 2$), the above inequality applied to $m_l$ gives
\[
m_k \le m_l \le 2e^{1/4}b \sqrt{\log N}\le 3b\sqrt{\log N}\,.
\]
This completes the proof of the lemma. 
\end{proof}
\begin{proof}[Proof of Theorem \ref{shthm}]
Let $Z' := (Y-\mu)/\sigma$. Note that 
\[
|K(Y) - \sigma K(Z')|\le K(\mu)\,.
\]
Therefore 
\begin{align*}
|\hs-\sigma| &= \frac{|K(Y)-\sigma K(Z)|}{K(Z)}\\
&\le \frac{|K(Y)-\sigma K(Z')|}{K(Z)} + \frac{\sigma|K(Z')-K(Z)|}{K(Z)}\\
&\le \frac{K(\mu)}{K(Z)} + \frac{\sigma|K(Z')-K(Z)|}{K(Z)}\,.
\end{align*}
Thus,
\begin{align*}
&\ee(\hs-\sigma)^2 \le 2K(\mu)^2 \ee(K(Z)^{-2}) + 2\sigma^2 \ee(K(Z)^{-2}(K(Z)-K(Z'))^2) \nonumber \\
&\le 2K(\mu)^2 \ee(K(Z)^{-2}) + 2\sigma^2( \ee(K(Z)^{-4})\ee((K(Z)-K(Z'))^4))^{1/2}\,.
\end{align*}
By Lemma \ref{zmm}, this gives
\begin{align}\label{kzz}
\ee(\hs-\sigma)^2 &\le  \frac{2K(\mu)^2m_2^2}{(n-2)^2} + \frac{2\sigma^2m_4^2(\ee((K(Z)-K(Z'))^4))^{1/2}}{(n-4)^2}\,.
\end{align}
Now recall that by the identity \eqref{kcc},
\[
K(Z) = K^{\circ\circ}(Z) = \sup_{v\,:\, K^\circ(v)\le 1} Z\cdot v\,.
\]
Therefore by the concentration of Gaussian maxima (inequality \eqref{gaussmax}),
\[
\pp(|K(Z)-\ee(K(Z))|\ge t) \le 2e^{-t^2/2a^2}
\]
for each $t\ge 0$. Since $Z'\sim N_n(0,I_n)$, this implies that 
\begin{align*}
\pp(|K(Z)-K(Z')|\ge t) &\le \pp(|K(Z)-\ee(K(Z))|\ge t/2)\\
&\qquad  + \pp(|K(Z')-\ee(K(Z))|\ge t/2)\\
&\le 4e^{-t^2/8a^2}\,.
\end{align*}
Thus,
\begin{align*}
\ee((K(Z)-K(Z'))^4) &\le \int_0^\infty 16t^3 e^{-t^2/8a^2}\, dt\\
&= \int_0^\infty 8u e^{-u/8a^2} \, du = 512 \,a^4\,.
\end{align*}
Substituting this in \eqref{kzz}, we get the desired inequality.
\end{proof}
\begin{proof}[Proof of Theorem \ref{genthm}]
Simply combine Theorems \ref{hsthm} and \ref{shthm}, and use the inequality 
\[
|\hs^2-\sigma^2|\le (\hs-\sigma)^2 + 2\sigma|\hs -\sigma|
\]
and finally H\"older's inequality to bound $\ee|\hs-\sigma|\le (\ee(\hs-\sigma)^2)^{1/2}$. 
\end{proof}
\subsection{Proof of Theorem \ref{mainthm}}
First, suppose that $X$ has rank $n$, so that $Y'=Y$. In this case we will use Theorem \ref{genthm} with $\mu=X\beta_0$. Define two functions on $\rr^n$ as
\begin{align*}
K(x) &:= \min\{|\beta|_1: \beta\in \rr^p,\, x= X\beta\}\,,\\
\TK(x) &:= \min\{|\beta|_1: \beta\in \rr^{p+n},\, x= \TX\beta\}\,.
\end{align*}
It is easy to prove that these are norms, since $X$ and $\TX$ have rank $n$. 
Take any $v\in \rr^n$ such that $K(v)\le 1$. Then there exists $\beta\in \rr^p$ such that $|\beta|_1\le 1$ and $v = X\beta$. Therefore, 
\begin{align*}
Z\cdot v &= Z\cdot X\beta= \sum_{j=1}^p \beta_j (Z\cdot X_j)\le |\beta|_1 \max_{1\le j\le p} |Z\cdot X_j|\,.
\end{align*}
By the definition \eqref{dualdef} of the dual norm $K^\circ$, this shows that
\begin{align*}
K^\circ(Z) \le  \max_{1\le j\le p} |Z\cdot X_j|\,.
\end{align*}
Therefore by Lemma \ref{gaussmom}, 
\begin{align}\label{kcbd}
\ee(K^\circ(Z)) \le 3\gamma\sqrt{n\log p}
\end{align}
provided that $p\ge 3$. Similarly, one has
\[
\TK^\circ(Z) \le  \max_{1\le j\le p} |Z\cdot \TX_j|\,.
\]
Let $\TX_j$ denote the $j^{\mathrm{th}}$ column of $\TX$. By the construction of $\TX$,  $\max_{1\le j\le p+n}\|\TX_j\|/\sqrt{n}=\gamma$. This implies, by Lemma \ref{gaussmom}, that
\begin{align}\label{kcbd2}
(\ee(\TK^\circ(Z)^k))^{1/k} \le 3\gamma\sqrt{n\log (p+n)}
\end{align}
for every integer $k\in [1,2\log p]$. 

Next, take any $v\in \rr^n$ such that $\TK^\circ(v)\le 1$. Since $\TK(\TX_j)$ is clearly $\le 1$ for each $j$, therefore $|v\cdot \TX_j| \le 1$ for every $j$ by the definition of the dual norm. The cases $j=p+1,\ldots, p+n$ for this inequality imply that the components of $v$ are all bounded by $(\sqrt{n}\gamma)^{-1}$. Consequently, $\|v\|\le \gamma^{-1}$. Thus, 
\begin{align}\label{abd}
\sup\{\|v\|:v\in \rr^n,\, \TK^\circ(v)\le 1\}\le \gamma^{-1}\,.
\end{align}
Plugging in the estimates \eqref{kcbd}, \eqref{kcbd2} and \eqref{abd} into Theorem \ref{genthm}, and observing that $K(X\beta_0)$ and $\TK(X\beta_0)$ are both bounded above by $|\beta_0|_1$, we get the statement of Theorem \ref{mainthm} when $\mathrm{rank}(X)=n$. Moreover, Theorem \ref{shthm} gives the desired upper bound on $\ee(\hs-\sigma)^2$.

Next, suppose that $\mathrm{rank}(X)=k<n$. Then there is a $k\times n$ matrix $A$ that maps the column space of $X$ on to $\rr^k$ and preserves inner products. Let $X'' = AX$ and $Y'' = AY'$. Then $Y'' \sim N_k(X''\beta_0, \sigma^2I_k)$. Moreover, $\|Y''-X''\beta\|=\|Y'-X'\beta\|$ for any $\beta\in \rr^p$. Therefore the definition of $\bh$ implies that 
\[
\bh = \argmin\{|\beta|_1:\beta\in \rr^p,\, \|Y''-X''\beta\|^2\le k\hs^2\}\,,
\]
where $\hs := \TK(Y)/\TK(Z)$. Define $K''$ on $\rr^k$ as
\[
K''(x) := \min\{|\beta|_1: \beta\in \rr^p,\, x= X''\beta\}.
\]
Again, it is easy to prove that this is a norm since $X''$ has rank $k$. To complete the proof, apply Theorem \ref{hsthm} with $Y''$ and $K''$ in place of $Y$ and~$K$ and use Theorem \ref{shthm} to get a bound for $\ee|\hs^2-\sigma^2|$ using the estimates \eqref{kcbd2} and \eqref{abd} obtained above. Lastly, note that $\|X''\bh - X''\beta_0\| = \|X\bh-X\beta_0\|$, and multiply the resulting inequality by~$k/n$.

\subsection{Proof of Theorem \ref{matthm}}
We can put this problem into the setting of Theorem \ref{genthm} by letting $n= lm$, and writing elements of $\rr^n$ as $l\times m$ matrices by putting the first $l$ components as the first column, components $l+1$ through $2l$ as the second column, and so on. For an element $x\in \rr^n$ let $M(x)$ denote the corresponding matrix. Define $K(x)$ to be the nuclear norm of the matrix $M(x)$. It is easy to see that this is indeed a norm on $\rr^n$. Taking this $K$ in Theorem \ref{genthm} and $\widetilde{K} = K$, it is easy to see that the estimator $\hat{M}$ is precisely the estimator prescribed by Theorem \ref{genthm} in this setting. 

Recall that the spectral norm of $M$ is defined as 
\[
\|M\| := \max_{1\le i\le k} s_i\,.
\]
Suppose that $A$ is an $l\times m$ matrix with $\|A\|\le 1$. Then 
\begin{align*}
\|A\|_{\mathrm{HS}}^2 \le k \|A\|^2\le k\,.
\end{align*}
Combining this with the well-known fact that the spectral norm is the dual of the nuclear norm (see \cite{hornjohnson91}, page 214), it follows that  the quantity $a$ of Theorem \ref{genthm} is bounded by $\sqrt{k}$. 

Next, let $Z = (z_{ij})_{1\le i\le l\, 1\le j\le m}$ be a matrix of i.i.d.~$N(0,1)$ entries. Then by Proposition 2.4 in \cite{rv10}, $(\ee\|Z\|^r)^{1/r}\le C(r)(\sqrt{l}+\sqrt{m})$ for every $r\ge 1$, where $C(r)$ is a constant that depends only on $r$. The proof is now easily completed by inserting these estimates into the error bounds from Theorem \ref{shthm} and Theorem \ref{genthm}. 

\subsection{Proof of Theorem \ref{matlow}}
Throughout this proof, $C$ will denote any positive universal constant, whose value may change from line to line. 
Let $k:=[ls/2]$. Then $1\le k\le l$. Let $M = (\mu_{ij})_{1\le i\le l, \, 1\le j\le m}$ be an $l\times m$ random matrix whose first $k$ rows consist of i.i.d.\ Uniform$[-\sigma,\sigma]$ random variables. Declare the remaining rows, if any, to be zero. Then note that $M$ has rank $\le k\le ls/2$. Since $\|M\|_*$ is the sum of the singular values of $M$ and $\|M\|_{\mathrm{HS}}^2$ is the sum of squares of the singular values of $M$, and the number of nonzero singular values equals the rank of $M$, the Cauchy--Schwarz inequality gives 
\begin{align*}
\|M\|_* &\le (ls)^{1/2} \|M\|_{\mathrm{HS}} \le (ls/2)^{1/2} (\sigma^2 lms/2)^{1/2} \le \delta\,. 
\end{align*} 
Let $Y = (y_{ij})_{1\le i\le l, \, 1\le j\le m}$ be a matrix such that given $M$, the entries of $Y$ are independent, and $y_{ij}\sim N(\mu_{ij}, \sigma^2)$. Then it is not difficult to show that
\[
\ee(\var(\mu_{ij}\mid Y)) \ge C\sigma^2\,.
\]
On the other hand, since $\tilde{\mu}_{ij}$ is a function of $Y$, the definition of variance implies that
\[
\ee((\tilde{\mu}_{ij}-\mu_{ij})^2\mid Y) \ge \var(\mu_{ij}\mid Y). 
\]
Combining the last two displays, we get
\begin{align*}
\ee\|\tilde{M}-M\|_{\mathrm{HS}}^2 &\ge \sum_{i=1}^{k} \sum_{j=1}^m \ee(\tilde{\mu}_{ij}-\mu_{ij})^2 \ge  Ckm\sigma^2\ge Clm\sigma^2s\,,
\end{align*}
which completes the proof of the theorem.

\vskip.2in
\noindent{\bf Acknowledgments.} I thank Rob Tibshirani, Sara van de Geer, Martin Wainwright and Guanyang Wang for many helpful comments. 
\bibliographystyle{plainnat}

\end{document}